\documentclass[11pt,twoside,reqno,letter]{amsart}
\usepackage[margin=1in]{geometry}
\usepackage{amsmath,graphicx,varioref,amscd,amssymb,color,bm,amsthm,epsfig,color,enumerate,fancybox,bbm,esint,upref,xcolor,latexsym,mathtools,breqn,appendix,setspace,physics,tikz,pdfsync,scalerel}
\usepackage[pdftex, colorlinks=true, linktocpage=true]{hyperref}


\makeatletter
\def\@tocline#1#2#3#4#5#6#7{\relax
	\ifnum #1>\c@tocdepth 
	\else
	\par \addpenalty\@secpenalty\addvspace{#2}%
	\begingroup \hyphenpenalty\@M
	\@ifempty{#4}{%
		\@tempdima\csname r@tocindent\number#1\endcsname\relax
	}{%
		\@tempdima#4\relax
	}%
	\parindent\z@ \leftskip#3\relax \advance\leftskip\@tempdima\relax
	\rightskip\@pnumwidth plus4em \parfillskip-\@pnumwidth
	#5\leavevmode\hskip-\@tempdima
	\ifcase #1
	\or\or \hskip 1em \or \hskip 2em \else \hskip 3em \fi%
	#6\nobreak\relax
	\dotfill\hbox to\@pnumwidth{\@tocpagenum{#7}}\par
	\nobreak
	\endgroup
	\fi}

\def\l@subsection{\@tocline{2}{0pt}{2pc}{5pc}{}}
\makeatother

\numberwithin{equation}{section}



%
\theoremstyle{plain}
\newtheorem{thm}{Theorem}[section]
\newtheorem*{thm*}{Theorem}
\newtheorem{mydef}[thm]{Definition}
\newtheorem{lem}[thm]{Lemma}
\newtheorem*{lem*}{Lemma}

\newtheorem*{prop*}{Proposition}

\theoremstyle{remark}
\newtheorem{rem}[thm]{Remark}

\newcommand{\R}{\mathbb{R}}

\newcommand{\N}{\mathbb{N}}
\newcommand{\Z}{\mathbb{Z}}
\newcommand{\T}{\mathbb{T}}
\newcommand{\Leray}{\mathcal{P}}
\newcommand{\wh}{\widehat}

\newcommand{\mi}{m_i}

\newcommand{\mf}{m_f}

\newcommand{\us}{u_{\text{\scriptsize s}}}
\newcommand{\un}{u_{\text{\scriptsize n}}}
\newcommand{\ws}{\omega_{\text{\scriptsize s}}}
\newcommand{\wn}{\omega_{\text{\scriptsize n}}}
\newcommand{\Ws}{\Omega_{\text{\scriptsize s}}}
\newcommand{\Wn}{\Omega_{\text{\scriptsize n}}}

\DeclarePairedDelimiter{\ceil}{\lceil}{\rceil}

%
%
\begin{document}
	
	
	\title[Local-in-time analytic solutions for an inviscid model of superfluidity in 3D]{Local-in-time analytic solutions for an inviscid model of superfluidity in 3D}
	
	\author[Jayanti]{Pranava Chaitanya Jayanti}
	\address[Jayanti]{\newline
		Department of Mathematics \\ University of Southern California \\ Los Angeles, CA 90089, USA}
	\email[]{\href{pjayanti@usc.edu}{pjayanti@usc.edu}}	
	
	\date{\today}
	
	
	\keywords{Superfluids; 3D macro-scale model; HVBK equations; Euler equation; Gevrey class; Local-in-time solutions; Analyticity}
	

    \begin{abstract}
		We address the existence of solutions for the inviscid version of the Hall-Vinen-Bekharevich-Khalatnikov equations in 3D, a macro-scale model of superfluidity. This system couples the incompressible Euler equations for the normal fluid and superfluid using a nonlinear mutual friction term that acts only at points of non-zero superfluid vorticity. In the first rigorous study of the inviscid HVBK system, we construct a unique local-in-time solution that is analytic in time and space.
	\end{abstract}
 
	\maketitle

	\setcounter{tocdepth}{2} 
	
	
	\section{Introduction} \label{sec:intro}

    Isobaric cooling (at pressures below $25$ bar) of helium-4 results in a second-order phase transition that can be explained only using quantum mechanics (Bose-Einstein condensation). Below $2.17$K, there is a mixture of two different phases of helium: a run-of-the-mill normal fluid and a strange superfluid phase. The number of He-4 atoms in the latter increases as the temperature is lowered, until it is entirely superfluid at absolute zero. Experimental evidence suggests that the two phases are unlike standard multiphase flows in that they do not possess any distinct phase boundaries. Instead, both fluids occupy the entire domain of the flow, interacting with each other only at points where the superfluid vorticity is non-zero~\cite{Vinen:808382,Vinen2006AnTurbulence,Landau1941TheoryII}. This has prompted descriptions of superfluidity at various length scales: micro-scale quantum mechanical models~\cite{Pitaevskii1959PhenomenologicalPoint,Khalatnikov1969AbsorptionPoint,Carlson1996AVortices}, meso-scale vortex-based models~\cite{Schwarz1978TurbulenceCounterflow,Schwarz1985Three-dimensionalInteractions,Schwarz1988Three-dimensionalTurbulence}, and macro-scale classical fluid dynamical models~\cite{Holm2001,Roche2009QuantumCascade,Tchoufag2010EddyII,Salort2011MesoscaleTurbulence,Verma2019TheModel}. It is to be remarked that a unified description of superfluidity is still wanting; see ~\cite{Barenghi2001QuantizedTurbulence,Paoletti2011QuantumTurbulence,Barenghi2014ExperimentalFluid,Berloff2014ModelingTemperatures,Jayanti2022AnalysisSuperfluidity} and references therein for more details on these various models. 
    
    One of the non-classical effects in superfluids is the existence of quantized vortex filaments, which is the basis for the meso-scale models. However, at length scales much larger than the inter-vortex spacing, the discrete nature of the superfluid vorticity can be ignored, and a classical picture emerges. Known as the Hall-Vinen-Bekharevich-Khalatnikov (HVBK) equations, this macro-scale model describes the superfluid using the Euler equations and the normal fluid using the Navier-Stokes equations, and couples them with a nonlinear mutual friction term that vanishes at points of zero superfluid vorticity. In this paper, we present the space-time analyticity of the \textit{inviscid} incompressible HVBK equations in 3D, i.e., both fluids are governed by the incompressible Euler equations. The study of the equations of fluid dynamics in analytic classes (in the space variable) was pioneered by Foias and Temam~\cite{Foias1989GevreyEquations}. A straightforward consequence of such high regularity in the spatial variable is the time analyticity of the solutions~\cite{Constantin1988Navier-StokesEquations,Promislow1991TimeEquations}. The analyticity of $L^p$ solutions to the incompressible Navier-Stokes equations has also been investigated in various works~\cite{Grujic1998SpaceLp,Lemarie-Rieusset2000UneR3,Lemarie-Rieusset2004NouvellesR3}.  Analyticity for the case of inviscid equations was first shown for the ``great lake'' equations~\cite{Levermore1997AnalyticityEquation}, of which the incompressible Euler equations are a special case. This was achieved by trading the regularity of the solution for a dissipative term in the analytic norm energy balance. The method of Foias and Temam was extended to general dissipative parabolic PDEs with analytic nonlinearities~\cite{Ferrari1998GevreyEquations,Bae2014GevreyNonlinearity}, while the study of inviscid equations was further developed in~\cite{Kukavica2009OnEquations} and~\cite{Biswas2017SpaceDynamics}. The former provided improved estimates of the analyticity radius for the 3D Euler equations, whereas the latter extended the results of Levermore and Oliver to encompass a wide range of incompressible, inviscid equations. In~\cite{Constantin2016ContrastEquations,Camliyurt2018OnEquations}, the authors demonstrated the persistence of the analyticity radius of the Euler equations in Lagrangian coordinates, a property that was also shown not to hold in the Eulerian formulation. The machinery of analyticity was also recently adopted to the setting of compressible flows~\cite{Bae2020AnalyticityEquations,Charve2021GevreyCapillarity,Jang2021MachSpaces,Deng2022RadiusSystem,Li2023TheSpaces}. Our work in this paper is inspired primarily by~\cite{Biswas2017SpaceDynamics}, and the main challenge involves the mutual friction term between the superfluid and normal fluid, which is singular at points of vanishing superfluid vorticity. Thus, the central question is: Given a non-vanishing initial superfluid vorticity (in the analytic class), does it remain non-vanishing? We find the answer in the affirmative, at least locally in time. To the best of our knowledge, this result is the first of its kind.

    We now briefly outline the notation used in this article. Following this, in Section~\ref{sec:mathematical model and main results}, we present the HVBK equations in the form pertaining to our work, before defining analytic classes, and stating the main result. All of the required a priori estimates are derived in Section ~\ref{sec:a priori estimates}. Subsequently, the construction of the solutions is briefly discussed in Section~\ref{sec:existence of a unique, time-analytic solution}, along with the proof of uniqueness.	
	
    \subsection{Notation} \label{sec:notation}
	We denote by $H^s(\T^3)$ the completion of $C^{\infty}(\T^3)$ under the Sobolev norm $H^s$. 
    Consider a 3D vector-valued function $u\equiv (u_1,u_2,u_3)\in C^{\infty}(\T^3)$. The set of all divergence-free, smooth 3D functions $u$ defines $C^{\infty}_{\text{d}}(\T^3)$. Then, $H^s_{\text{d}}(\T^3)$ is the completion of $C^{\infty}_{\text{d}}(\T^3)$ under the $H^s$ norm. In particular, $H^0_{\text{d}}$ is denoted by $H$. The Leray projector, $\Leray\colon L^2\mapsto H$, projects the space of square-integrable vector-valued functions into its divergence-free subspace. The $L^2$ scalar product between H\"older conjugate vector-valued functions $v$ and $w$ is denoted by $\langle v,w \rangle = \int_{\T^3} v\cdot w \ dx$.
	
	
	We also use the notation $X\lesssim Y$ and $X\gtrsim Y$ to imply that there exists a positive constant $C$ such that $X\le CY$ and $CX\ge Y$, respectively. Throughout the article, $C$ is used to denote a possibly large constant that depends on the system parameters, and whose value can vary across the different steps of calculations.

    \section{Mathematical model and main result} \label{sec:mathematical model and main results}
    \subsection{The governing equations} \label{sec:the governing equations}
    We now present the form of the HVBK equations that is used here, namely
    \begin{equation} \label{eq:HVBK}
        \begin{aligned}
    		\partial_t \us + \us\cdot\nabla \us + \nabla p_{\text{\scriptsize s}}
            &= 
            -\rho_{\text{n}} \frac{\ws}{\abs{\ws}}\times (\ws\times v) \\
            \partial_t \un + \un\cdot\nabla \un + \nabla p_{\text{\scriptsize n}}
            &= 
            \rho_{\text{s}} \frac{\ws}{\abs{\ws}}\times (\ws\times v) \\
            v 
            &=
            \un - \us \\
            \nabla\cdot \us 
            &= 
            0 \\
            \nabla\cdot \un 
            &= 
            0 .
    	\end{aligned}
    \end{equation}
	Here, the subscripts $\text{s}$ and $\text{n}$ denote the superfluid and normal fluid, respectively. The velocity, pressure, and density fraction are labeled by $u,p$, and $\rho$ (respectively), and the vorticity is $\omega = \nabla\times u$. Since both fluids are incompressible, it is clear that the density fractions $\rho_{\text{\scriptsize s}}$ and $\rho_{\text{\scriptsize n}}$ are constant (with $\rho_{\text{\scriptsize s}}+\rho_{\text{\scriptsize n}} = 1$), and the pressures $p_{\text{\scriptsize s}}$ and $p_{\text{\scriptsize s}}$ are simply Lagrange multipliers that vanish under the action of the Leray projector $\Leray$, resulting in
    \begin{equation} \label{eq:HVBK Leray projected}
        \begin{aligned}
    		\partial_t \us + \Leray(\us\cdot\nabla \us)
            &= 
            -\rho_{\text{\scriptsize n}} \Leray\left(\frac{\ws}{\abs{\ws}}\times (\ws\times v)\right) \\
            \partial_t \un + \Leray(\un\cdot\nabla \un)
            &= 
            \rho_{\text{\scriptsize s}} \Leray\left(\frac{\ws}{\abs{\ws}}\times (\ws\times v)\right) .
    	\end{aligned}
    \end{equation}
    These equations are supplemented with the initial conditions
	\begin{equation} \label{eq:initial conditions}
		\us(0,x) 
        = 
        \us^0(x), 
        \quad 
        \un(0,x) 
        = 
        \un^0(x) 
        \quad 
        \text{a.e.} \ x\in\T^3 .
	\end{equation}
	We use periodic boundary conditions, i.e., we are working on a 3-dimensional torus $[0,2\pi]^3$. As a result, it is necessary that the average vorticity vanishes, i.e.,  $\int_{\T^3} \omega(t,x) = 0$, for both fluids. 
 
    Compared to the original model~\cite{Holm2001}, the system in~\eqref{eq:HVBK} has excluded compressibility and thermal effects, as well as a higher-order ``vortex tension'' term that is expected to lead to ill-posedness. An additional term in the mutual friction, which makes no qualitative difference to the results here, has been disregarded for simplicity. Finally, we treat the normal fluid as inviscid, in contrast to earlier mathematical work by the author~\cite{Jayanti2021GlobalEquations}, where global well-posedness of the 2D HVBK system was established if both fluids are considered viscous. In the physics community, the HVBK equations are often used to study superfluid Couette flow~\cite{Barenghi1988TheII,Henderson2004SuperfluidAnnulus} and superfluid turbulence. Particularly in the context of turbulence, different schemes have been employed to handle the generation of small scales in simulations of the inviscid superfluid: sub-grid scale modeling~\cite{Tchoufag2010EddyII}, spectral suppression of small scales~\cite{Salort2011MesoscaleTurbulence}, and adding an artificial viscosity~\cite{Roche2009QuantumCascade,Verma2019TheModel}. In reality, there is another mechanism for energy loss through nonlinear interactions of Kelvin waves along the vortex filaments, which lead to an energy cascade and eventually phononic dissipation, but that is not described by this macro-scale model.

    The system in~\eqref{eq:HVBK} (and equivalently~\eqref{eq:HVBK Leray projected}) is constructed so as to conserve mass and momentum, and also obey an energy balance equation. Momentum conservation can be seen by multiplying the superfluid equation by $\rho_{\text{\scriptsize s}}$, the normal fluid equation by $\rho_{\text{\scriptsize n}}$, adding them, and integrating over space. Using the periodic boundary conditions, we observe that the total momentum of the two fluids, i.e., $\int_{\T^3} (\rho_{\text{\scriptsize s}} \us(t) + \rho_{\text{\scriptsize n}} \un(t))$, is constant. Energy conservation also follows similarly: Test the two equations by (the corresponding) $\rho u$ and add to (formally) obtain
    \begin{equation} \label{eq:energy estimate}
        \frac{1}{2}\frac{d}{dt}\left(\rho_{\text{\scriptsize s}}\norm{\us}_{L^2}^2 + \rho_{\text{\scriptsize n}}\norm{\un}_{L^2}^2\right) + \rho_{\text{\scriptsize s}} \rho_{\text{\scriptsize n}}\int_{\T^3}\left(\lvert\ws\rvert\lvert v\rvert^2 -\frac{\left(\ws\cdot v\right)^2}{\abs{\ws}} \right) 
        = 
        0 .
    \end{equation}
    Since the integrand in the second term is always non-negative, we see that the total energy of the system is bounded. The mutual friction acts as a retarding force, working to equilibrate the two phases. Its structure is motivated by the experimental observation that the fluids only interact at points of non-zero superfluid vorticity. The singularity that stems from the magnitude of the vorticity vector should be acknowledged, as it constitutes the main challenge in the study of this system. We circumvent this hurdle by ensuring that no regions of vanishing superfluid vorticity occur. Since we seek smooth (analytic) solutions, we begin from initial data that satisfies $\inf_{x\in\T^3} \abs{\ws^0}=\mi>0$, and limit the evolution of the system to a time when $\inf_{x\in\T^3} \abs{\ws} \ge \mf$, where $\mf<\mi$. Therefore, the solution we obtain is local-in-time. A similar strategy was employed in the analysis of a micro-scale model of superfluidity in~\cite{Jayanti2022LocalSuperfluidity,Jang2024Small-dataSuperfluidity,Jang2024OnModel}, where it was necessary to prevent the formation of vacuum states.
    
    \subsection{Analytic classes} \label{sec:Gevrey classes}
    In the case of the incompressible Navier-Stokes equations (NSE), the average velocity is a conserved quantity so that we may set it to be zero (using a Galilean transformation). This, combined with the Poincar\'e inequality, allows one to work with the prevailing dissipative Stokes operator. Absent such a luxury in our system, we resort to the method employed in~\cite{Ferrari1998GevreyEquations} and consider the operator $A:= I-\Delta$, which accounts for the inhomogeneous norms that are needed. Before defining the analytic classes, we remark that the Fourier decomposition of any vector-valued function $f\in H^s(\T^3)$ is denoted by $(\wh{f}(k))_{k\in\Z^3}$.

    \begin{mydef}[Analytic class]
        For any $p\ge 0$ and $\sigma\ge 0$, the corresponding analytic class is defined~as 
        \begin{equation} \label{eq:Gevrey class definition}
            G^{\frac{p}{2}}_{\sigma}(\T^3):= D(A^{\frac{p}{2}}e^{\sigma A^{\frac{1}{2}}}) ,
        \end{equation}
        which is a Hilbert space with the inner product
        \begin{equation} \label{eq:Gevrey class inner product}
            \langle v,w \rangle_{G^{\frac{p}{2}}_{\sigma}} := \sum_{j\in\Z^3} \wh{v}(j)\cdot \overline{\wh{w}(j)} (1+\abs{j}^2)^p e^{2\sigma (1+\abs{j}^2)^{\frac{1}{2}}} ,
        \end{equation}
        (where $\overline{\wh{w}(j)}$ denotes the complex conjugate of the Fourier coefficients) and the norm
        \begin{equation} \label{eq:Gevrey class norm}
            \lVert v \rVert_{G^{\frac{p}{2}}_{\sigma}}^2 :=  \sum_{k\in\Z^3} \lvert\wh{v}(k)\rvert^2 (1+\abs{k}^2)^p e^{2\sigma (1+\abs{k}^2)^{\frac{1}{2}}} .
        \end{equation}
    \end{mydef}
    \begin{rem} \label{rem:properties of Gevrey norm}
        The inclusion $G^{\frac{p}{2}}_{\sigma}\subset H^{p}$ holds for all $p\ge 0$ and $\sigma> 0$. We also note that $G^{\frac{p}{2}}_{\sigma}$ forms an algebra for $p>\frac{3}{2}$, and the proof can be found in~\cite[Lemma 1]{Ferrari1998GevreyEquations}.
    \end{rem}
 
	\subsection{The main result} \label{sec:the main result}
    We are now ready to state the main theorem. Throughout the article, we assume that $\rho_{\text{\scriptsize s}}$ and $\rho_{\text{\scriptsize n}}$ are fixed constants, and as a consequence they do not explicitly feature in the estimates or results.
    \begin{thm} [Local-in-time existence and uniqueness] \label{thm:Gevrey regularity}
        For $p>\frac{5}{2}$, $\mi>0$, and $0< \sigma_0 < \frac{\mi}{2C_0}$ (where $C_0$ is the Poincar\'e constant), suppose that the initial velocities $u_{\text{\normalfont \scriptsize s}}^0,u_{\text{\normalfont \scriptsize n}}^0$ belong to $(G^{\frac{p+1}{2}}_{\sigma_0}\cap H)(\T^3)$, with the initial superfluid vorticity $\omega_{\text{\normalfont \scriptsize s}}^0 = \nabla\times u_{\text{\normalfont \scriptsize s}}^0$ satisfying
        \begin{equation} \label{eq:initial data conditions}
            \inf_{x\in\T^3} \abs{\omega_{\text{\normalfont \scriptsize s}}^0} = \mi .
        \end{equation}
        Then, for any $2C_0 \sigma_0<\mf<\mi$, there exists a $\delta > 0$ and a time $T^*$ (both depending only on the size of the initial data, $\mi$, $\mf$, $\sigma_0$, and $p$) such that
        \begin{itemize}
            \item the system~\eqref{eq:HVBK Leray projected} admits a unique solution $u_{\text{\normalfont \scriptsize s}}(t),u_{\text{\normalfont \scriptsize n}}(t)\in (G^{\frac{p+1}{2}}_{\sigma(t)}\cap H)(\T^3)$, with $\sigma(t) = \sigma_0 - \delta t$, for $t\in[0,T^*)$, 
            \item the solution satisfies $\inf_{t,x\in (0,T^*)\times\T^3} \abs{\omega_{\text{\normalfont \scriptsize s}}(t,x)} \ge \mf$, and
            \item the map $t\mapsto u(t)$ is analytic on $(0,T^*)$.
        \end{itemize}
	\end{thm}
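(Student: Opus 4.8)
\emph{Proof strategy.}
The plan is to run the Foias--Temam/Levermore--Oliver energy method in analytic classes, in the form developed in~\cite{Biswas2017SpaceDynamics}, together with a bootstrap that prevents $\ws$ from vanishing so that the mutual friction never reaches its singularity. The engine is the a priori estimate for $\Phi(t):=\rho_{\text{\scriptsize s}}\|\us(t)\|_{G^{(p+1)/2}_{\sigma(t)}}^{2}+\rho_{\text{\scriptsize n}}\|\un(t)\|_{G^{(p+1)/2}_{\sigma(t)}}^{2}$, with $\sigma(t)=\sigma_{0}-\delta t$. Differentiating and using $\dot\sigma=-\delta<0$ produces, alongside the $\langle\partial_{t}u,u\rangle$-type terms, the nonpositive ``analytic dissipation'' $-2\delta\bigl(\rho_{\text{\scriptsize s}}\|A^{1/4}\us\|_{G^{(p+1)/2}_{\sigma}}^{2}+\rho_{\text{\scriptsize n}}\|A^{1/4}\un\|_{G^{(p+1)/2}_{\sigma}}^{2}\bigr)$, i.e.\ a half-derivative gain. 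For the transport terms $\Leray(u\cdot\nabla u)$ I would use the divergence-free cancellation of the top-order contribution together with the commutator bounds for $e^{\sigma A^{1/2}}$ from~\cite{Biswas2017SpaceDynamics}; the remainder is controlled by $C\,\mathcal G(\Phi)$ plus a small multiple of the analytic dissipation (with $\mathcal G$ continuous and increasing), which is absorbed once $\delta$ is taken large relative to $\Phi(0)^{1/2}$, as is forced in the inviscid setting.

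The new ingredient is the mutual friction. Writing $\tfrac{\ws}{|\ws|}\times(\ws\times v)=-|\ws|\,\Pi_{\ws^{\perp}}v$, where $\Pi_{\ws^{\perp}}$ is the orthogonal projection onto $\ws^{\perp}$ and $v=\un-\us$, its contribution to $\tfrac{d}{dt}\Phi$ becomes $2\rho_{\text{\scriptsize s}}\rho_{\text{\scriptsize n}}\bigl\langle -|\ws|\Pi_{\ws^{\perp}}v,\,v\bigr\rangle_{G^{(p+1)/2}_{\sigma}}$. Commuting the multiplier $A^{(p+1)/2}e^{\sigma A^{1/2}}$ through the coefficient $b:=|\ws|\Pi_{\ws^{\perp}}$ leaves a leading term $-2\rho_{\text{\scriptsize s}}\rho_{\text{\scriptsize n}}\int_{\T^{3}}|\ws|\,\bigl|\Pi_{\ws^{\perp}}\bigl(A^{(p+1)/2}e^{\sigma A^{1/2}}v\bigr)\bigr|^{2}\le 0$, which may be discarded, plus commutator terms $\bigl\langle [A^{(p+1)/2}e^{\sigma A^{1/2}},b]\,v,\,A^{(p+1)/2}e^{\sigma A^{1/2}}v\bigr\rangle$. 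These involve only $\nabla b$, not $b$ itself, and since $\nabla b\sim(\nabla\ws)\cdot(\text{smooth functions of }\ws)$ one has the crucial identity of regularity levels $\|\nabla\ws\|_{G^{(p-1)/2}_{\sigma}}\lesssim\|\us\|_{G^{(p+1)/2}_{\sigma}}$, so no net derivative is lost; moreover $\|\nabla b\|_{L^{\infty}}\lesssim C(\mf)(1+\|\ws\|_{C^{1}})$ is finite because $p>\tfrac52$ forces $G^{p/2}_{\sigma}\hookrightarrow C^{1}$. Here one needs (i) a Gevrey algebra/composition estimate for the maps $\ws\mapsto|\ws|$ and $\ws\mapsto\ws/|\ws|$, which are real-analytic on $\{|y|\ge\mf\}$, with constants depending on $\mf^{-1}$, and (ii) the running hypothesis $\inf_{\T^{3}}|\ws(t)|\ge\mf$. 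Combining everything yields $\tfrac{d}{dt}\Phi\le C(\mf)\,\mathcal G(\Phi)$ on the set where $\inf|\ws|\ge\mf$, so Grönwall gives $\Phi(t)\le 2\Phi(0)$ for $t$ in an interval depending only on the quantities in the statement (and $T^{*}\le\sigma_{0}/\delta$ keeps $\sigma(t)>0$).

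It remains to close the bootstrap on the vorticity, after which construction and the rest are routine. Since $p>\tfrac52$, $\ws(t)\in G^{p/2}_{\sigma(t)}\hookrightarrow C^{1}(\T^{3})$, so $t\mapsto\inf_{x}|\ws(t,x)|$ is continuous; from $\partial_{t}\ws=-\us\cdot\nabla\ws+\ws\cdot\nabla\us-\rho_{\text{\scriptsize n}}\nabla\times\!\bigl(\tfrac{\ws}{|\ws|}\times(\ws\times v)\bigr)$ one gets $\|\partial_{t}\ws(t)\|_{L^{\infty}}\le C(\mf)(1+\Phi(t))$ wherever $\inf|\ws|\ge\mf$, hence $\inf_{x}|\ws(t,x)|\ge\mi-C(\mf)(1+2\Phi(0))\,t$ there. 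A continuity argument then yields $\inf|\ws|\ge\mf$ on $[0,T^{*})$ once $T^{*}$ is small enough that $C(\mf)(1+2\Phi(0))\,T^{*}<\mi-\mf$; the restrictions $0<\sigma_{0}<\tfrac{\mi}{2C_{0}}$ and $2C_{0}\sigma_{0}<\mf<\mi$ are used to make the bootstrap open at $t=0$, relating the physical-space bound $\inf|\ws^{0}|=\mi$ to the analytic data through the relevant embedding constants. For existence I would run a Galerkin scheme in Fourier modes, additionally replacing $|\ws|$ by $\sqrt{|\ws|^{2}+\veps^{2}}$ so each truncated system is an ODE with globally smooth right-hand side; the estimates above are uniform in the truncation level and in $\veps$, so one passes to the limit by weak-$*$ convergence in $L^{\infty}_{t}G^{(p+1)/2}_{\sigma(t)}$ and strong convergence in $C_{t}H^{p}(\T^{3})$ (Aubin--Lions), the latter also transferring $\inf|\ws|\ge\mf$ to the limit and removing the $\veps$-regularization. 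Uniqueness follows by estimating the difference of two solutions in $L^{2}$: the transport differences are controlled by the $C^{1}$ bounds, and the friction difference is Lipschitz because $y\mapsto y/|y|$ is Lipschitz on $\{|y|\ge\mf\}$ with constant $\lesssim\mf^{-1}$, so Grönwall closes it. Finally, on each $[0,T']\subset[0,T^{*})$ the solution lies in $G^{(p+1)/2}_{\sigma'}$ for some $\sigma'>0$, and the right-hand side of~\eqref{eq:HVBK Leray projected} is an analytic map into $G^{(p-1)/2}_{\sigma'}$ --- analytic precisely because $\inf|\ws|\ge\mf$ keeps the friction away from its singularity --- so the Banach-space ODE $\dot u=N(u)$ with analytic $N$ has a solution analytic in $t$ (equivalently, one bootstraps Cauchy estimates on $\|\partial_{t}^{n}u\|$ directly from the equation), as in~\cite{Foias1989GevreyEquations,Promislow1991TimeEquations}.

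The main obstacle is the mutual friction combined with keeping $\ws$ away from zero: finding the algebraic form of the friction that avoids a net loss of derivatives in the analytic norm, proving the Gevrey composition bound for the non-polynomial nonlinearity $\ws\mapsto\ws/|\ws|$ with explicit dependence on $\mf^{-1}$, and arranging that the two a priori inputs --- the Grönwall bound on $\Phi$ and the lower bound $\inf|\ws|\ge\mf$ --- close simultaneously through the bootstrap on the short interval $[0,T^{*})$.
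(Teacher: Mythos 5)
Your overall skeleton --- an analytic-class energy estimate with shrinking radius $\sigma(t)=\sigma_0-\delta t$ supplying the artificial half-derivative dissipation, a bootstrap keeping $\inf_{x}|\ws|\ge\mf$ so the friction never reaches its singularity, a Galerkin construction, and time analyticity deduced from spatial analyticity --- is the same as the paper's. Your treatment of the friction term is a genuinely different route: you exploit the sign of the leading term after commuting the Gevrey multiplier through the coefficient, whereas the paper simply estimates the whole term by the product estimate of Lemma~\ref{lem:nonlinear estimate} with $K=4$, combined with an explicit Cauchy-estimate bound on $\lVert A^{r/2}\tfrac{1}{|\ws|}\rVert_{G^{p/2}_{\sigma}}$ proved in the appendix; note that the hypotheses $\sigma_0<\tfrac{\mi}{2C_0}$ and $2C_0\sigma_0<\mf$ come precisely from the convergence of that composition bound (the Gevrey radius must stay below the analyticity radius of $x\mapsto 1/x$ at distance $\mf$), not from ``opening the bootstrap at $t=0$'' as you state. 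Two caveats on your variant: the commutator of $A^{(p+1)/2}e^{\sigma A^{1/2}}$ with multiplication by the coefficient built from $|\ws|$ and $\ws/|\ws|$ does not reduce to $\nabla b$ alone --- the exponential weight only yields a half-derivative gain and requires the coefficient itself in a Gevrey norm --- but since such half-derivative losses are exactly what $\delta\lVert A^{1/4}\cdot\rVert_{G}^{2}$ absorbs, this is a repairable imprecision; and the Gevrey composition estimate you defer is the real technical content (the paper's appendix). Your $\veps$-regularization of $|\ws|$ is an admissible alternative to the paper's Bernstein-type argument showing $|P^N\ws^0|\ge\tfrac{\mi+\mf}{2}$ for large $N$, though you would still need to address that truncation lowers the pointwise bound on the initial vorticity.

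The genuine gap is uniqueness. You propose an $L^2$ difference estimate, invoking that $y\mapsto y/|y|$ is Lipschitz on $\{|y|\ge\mf\}$; but this gives Lipschitz dependence on the \emph{vorticity} difference $\Ws=\nabla\times\Phi_s$, not on the velocity difference $\Phi_s$ in $L^2$. Concretely, the friction difference contains terms such as $\tfrac{\ws^1}{|\ws^1|}\times(\Ws\times v^2)$, so testing against $\Phi_s,\Phi_n$ produces integrals of the form $\int_{\T^3}\Phi\cdot M\,(\nabla\times\Phi_s)$ with merely bounded coefficients $M$; these are controlled only by $\lVert\Phi\rVert_{L^2}\lVert\nabla\Phi_s\rVert_{L^2}$, and with no viscosity there is nothing to absorb the extra derivative --- integration by parts just moves it onto the other difference factor, and there is no symmetric structure producing a cancellation. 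Passing to the vorticity difference equations does not help: $\nabla\times\mathcal F$ tested against $\Ws$ loses a derivative again, so the estimate never closes at any finite regularity level. This is exactly the obstruction the paper points out before its uniqueness proof, which is instead carried out in $G^{q/2}_{\sigma}$ with $q=p-\tfrac12$, where the shrinking radius again provides $\delta\lVert A^{1/4}\cdot\rVert_{G^{q/2}_{\sigma}}^2$ on the left to absorb the half-derivative losses from the friction and vortex-stretching differences, after which Gr\"onwall applies. As written, your uniqueness step would fail; it must be redone with the artificial analytic dissipation (or an equivalent mechanism).
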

    
    To prove Theorem ~\ref{thm:Gevrey regularity}, we work with a Galerkin-truncated approximation of~\eqref{eq:HVBK}. The strategy involves two distinct parts: deriving the required a priori estimates in the analytic norm, and ensuring that $\abs{\ws}$ is bounded below by $\mf$. The existence of a solution to the approximate system follows from the ODE theory, so we only present a sketch. It is also argued that a standard energy-based approach to uniqueness is not sufficient, and the estimates are instead carried out in the analytic norms. Finally, the time analyticity of the solution is a consequence of the analytic regularity in space, as shown in~\cite[Theorem 12.1]{Constantin1988Navier-StokesEquations} or~\cite[Theorem 1.1]{Promislow1991TimeEquations}. 

    \begin{rem} \label{rem:effect of changing mi and mf}
        We note that the regularity of the initial data increases with $\sigma_0$. From Theorem~\ref{thm:Gevrey regularity}, we see that if the initial lower bound to the vorticity magnitude $(\mi)$ is higher, then we can also start from initial data that is more regular. Similarly, if we wish to extend the evolution time by decreasing $\mf$, then it comes at the expense of a reduced regularity of the initial data. 
    \end{rem}
    
    In our system, we have several nonlinear terms which are treated using the following lemma that generalizes~\cite[Lemma~8]{Levermore1997AnalyticityEquation} and~\cite[Proposition~7.2]{Biswas2017SpaceDynamics}.
    \begin{lem}[Nonlinear estimate] \label{lem:nonlinear estimate}
        Consider an operator $T$ whose representation in Fourier space is in the form of a multiplier, i.e., 
        \begin{equation} \label{eq:Fourier multiplier operator}
            \wh{T(f)}(k) := m(k)\wh{f}(k) , \qquad \lvert m(k) \rvert \lesssim \lvert k \rvert.
        \end{equation}
        Then, for $1\le i\le K\in\N$ and $f_i,g\in C^{\infty}_x$, we have
        \begin{equation}
            \lvert \langle \Pi_{i=1}^K f_i , Tg\rangle_{G^{\frac{p}{2}}_{\sigma}} \rvert 
            \lesssim 
            \sum_{j=1}^K \lVert A^{\frac{1}{4}} f_j \rVert_{G^{\frac{p}{2}}_{\sigma}} \left( \Pi_{\substack{i=1 \\ i\neq j}}^K \lVert f_i \rVert_{G^{\frac{p}{2}}_{\sigma}} \right) \lVert A^{\frac{1}{4}} g \rVert_{G^{\frac{p}{2}}_{\sigma}} ,
        \end{equation}
        for $p>2$.
    \end{lem}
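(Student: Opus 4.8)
The plan is to pass to Fourier coefficients, reduce the bilinear pairing to a purely combinatorial sum over frequencies in $\Z^3$, and close it with Young's convolution inequality and Cauchy--Schwarz; the one structural input is the subadditivity of $k\mapsto(1+\lvert k\rvert^2)^{1/2}$ under addition of frequencies, which lets the exponential weight be split at no cost in $\sigma$. Since $f_i,g\in C^{\infty}_x$, all sums below converge absolutely and the rearrangements are legitimate. Writing $\wh{\Pi_{i=1}^K f_i}(k)=\sum_{k_1+\dots+k_K=k}\Pi_{i=1}^K\wh{f_i}(k_i)$ and $\wh{Tg}(k)=m(k)\wh g(k)$, the triangle inequality applied to~\eqref{eq:Gevrey class inner product} gives
\begin{equation*}
    \lvert\langle\Pi_{i=1}^K f_i,Tg\rangle_{G^{\frac{p}{2}}_{\sigma}}\rvert \le \sum_{k_1,\dots,k_K\in\Z^3}\bigl(\Pi_{i=1}^K\lvert\wh{f_i}(k_i)\rvert\bigr)\,\lvert m(k)\rvert\,\lvert\wh g(k)\rvert\,(1+\lvert k\rvert^2)^p e^{2\sigma(1+\lvert k\rvert^2)^{1/2}},
\end{equation*}
where $k:=k_1+\dots+k_K$ throughout. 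I would first record the elementary inequality $(1+\lvert k_1+\dots+k_K\rvert^2)^{1/2}\le\sum_{i=1}^K(1+\lvert k_i\rvert^2)^{1/2}$, proved by induction from the case $K=2$ (immediate upon squaring and using $\lvert k_1+k_2\rvert\le\lvert k_1\rvert+\lvert k_2\rvert$). It yields the cost-free splitting $e^{2\sigma(1+\lvert k\rvert^2)^{1/2}}\le e^{\sigma(1+\lvert k\rvert^2)^{1/2}}\Pi_{i=1}^K e^{\sigma(1+\lvert k_i\rvert^2)^{1/2}}$, so that each of the $K+1$ functions receives exactly the exponential weight figuring in its own $G^{\frac{p}{2}}_{\sigma}$-norm, with no enlargement of $\sigma$.

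Next comes the bookkeeping of the polynomial weight. Introduce the nonnegative $\ell^2(\Z^3)$ sequences
\begin{equation*}
    F_i(k):=\lvert\wh{f_i}(k)\rvert(1+\lvert k\rvert^2)^{p/2}e^{\sigma(1+\lvert k\rvert^2)^{1/2}},\qquad \widetilde F_j(k):=(1+\lvert k\rvert^2)^{1/4}F_j(k),\qquad \widetilde G(k):=\lvert\wh g(k)\rvert(1+\lvert k\rvert^2)^{p/2+1/4}e^{\sigma(1+\lvert k\rvert^2)^{1/2}},
\end{equation*}
whose $\ell^2$ norms equal $\lVert f_i\rVert_{G^{\frac{p}{2}}_{\sigma}}$, $\lVert A^{1/4}f_j\rVert_{G^{\frac{p}{2}}_{\sigma}}$ and $\lVert A^{1/4}g\rVert_{G^{\frac{p}{2}}_{\sigma}}$ respectively (recall $A^{1/4}$ has Fourier symbol $(1+\lvert k\rvert^2)^{1/4}$). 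Substituting these in, invoking the exponential splitting above and $\lvert m(k)\rvert\lesssim\lvert k\rvert\lesssim(1+\lvert k\rvert^2)^{1/2}$, the summand is dominated by $C\bigl(\Pi_{i=1}^K F_i(k_i)(1+\lvert k_i\rvert^2)^{-p/2}\bigr)\widetilde G(k)(1+\lvert k\rvert^2)^{p/2+1/4}$. Since $(1+\lvert k\rvert^2)^{1/2}\le K\max_i(1+\lvert k_i\rvert^2)^{1/2}$, one has $(1+\lvert k\rvert^2)^{p/2+1/4}\le K^{p+1/2}\sum_{j=1}^K(1+\lvert k_j\rvert^2)^{p/2+1/4}$, and absorbing the $j$-th such factor converts $F_j(k_j)(1+\lvert k_j\rvert^2)^{-p/2}$ into $\widetilde F_j(k_j)$. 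Hence
\begin{equation*}
    \lvert\langle\Pi_{i=1}^K f_i,Tg\rangle_{G^{\frac{p}{2}}_{\sigma}}\rvert \lesssim \sum_{j=1}^K\ \sum_{k_1,\dots,k_K}\widetilde F_j(k_j)\Bigl(\Pi_{\substack{i=1\\ i\ne j}}^K F_i(k_i)(1+\lvert k_i\rvert^2)^{-p/2}\Bigr)\widetilde G(k_1+\dots+k_K).
\end{equation*}

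Finally I would close each term. Fix $j$ and let $H_j$ be the $K$-fold convolution of the sequence $\widetilde F_j$ with the sequences $(1+\lvert\cdot\rvert^2)^{-p/2}F_i$ for $i\ne j$; then the inner double sum equals $\sum_{k}H_j(k)\widetilde G(k)\le\lVert H_j\rVert_{\ell^2}\lVert\widetilde G\rVert_{\ell^2}$ by Cauchy--Schwarz, and Young's convolution inequality ($\ell^2\ast\ell^1\ast\cdots\ast\ell^1\hookrightarrow\ell^2$) gives $\lVert H_j\rVert_{\ell^2}\le\lVert\widetilde F_j\rVert_{\ell^2}\Pi_{i\ne j}\lVert(1+\lvert\cdot\rvert^2)^{-p/2}F_i\rVert_{\ell^1}$. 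A further Cauchy--Schwarz bounds each $\ell^1$ norm by $\lVert F_i\rVert_{\ell^2}\bigl(\sum_{k\in\Z^3}(1+\lvert k\rvert^2)^{-p}\bigr)^{1/2}$, and the last series is finite precisely because $2p>3$, which is guaranteed by $p>2$. Summing over $j$ and re-expressing the $\ell^2$ norms via the identifications above yields the claim. The only genuinely delicate point is the weight bookkeeping: one must arrange that exactly one factor $A^{1/4}$ lands on $g$ and one on the highest-frequency $f_j$, while each remaining $f_i$ keeps a summable decaying weight $(1+\lvert k_i\rvert^2)^{-p/2}$; everything else is routine once the subadditivity of $(1+\lvert k\rvert^2)^{1/2}$ has rendered the exponential split cost-free.
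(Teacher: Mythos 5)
Your proof is correct, and its skeleton coincides with the paper's: expand the Gevrey pairing in Fourier coefficients, split the exponential weight at no cost using the subadditivity of $k\mapsto(1+\lvert k\rvert^2)^{1/2}$ over the convolution constraint, put one factor of $A^{\frac14}$ on $g$, and close with Young's convolution inequality plus Cauchy--Schwarz (the paper phrases your $\ell^1$--$\ell^2$ step as the Wiener-algebra embedding $\lVert h\rVert_{\mathcal W}\lesssim\lVert A^{\frac{s}{2}}h\rVert_{L^2}$, $s>\frac32$). The one genuine difference is the bookkeeping of the polynomial weight: the paper bounds $(1+\lvert k\rvert^2)^{\frac p2}\lesssim\sum_j(1+\lvert h_j\rvert^2)^{\frac p2}$ but distributes the leftover half power multiplicatively over \emph{all} factors via $\lvert k\rvert^{\frac12}\lesssim\Pi_i(1+\lvert h_i\rvert^2)^{\frac14}$, so each spectator is measured in $\lVert A^{\frac14}e^{\sigma A^{\frac12}}f_i\rVert_{\mathcal W}$ and the embedding into $G^{\frac p2}_{\sigma}$ forces $p>2$; you instead send the entire surplus $(1+\lvert k\rvert^2)^{\frac p2+\frac14}$ to the single largest frequency $k_j$, so the spectators only require $\sum_k\lvert\wh{f_i}(k)\rvert e^{\sigma(1+\lvert k\rvert^2)^{1/2}}\lesssim\lVert f_i\rVert_{G^{\frac p2}_{\sigma}}$, which already holds for $p>\frac32$. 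Your variant therefore yields the lemma under the weaker hypothesis $p>\frac32$ (hence certainly on the stated range $p>2$), at the harmless cost of the combinatorial constant $K^{p+\frac12}$; the paper's version is marginally simpler to state because the same quarter-derivative weight appears on every factor. A minor caveat, shared with the paper's own formulation: for data that are merely $C^{\infty}$ the Gevrey pairing need not be finite, so the chain of inequalities should be read as an a priori bound between possibly infinite nonnegative quantities (justified by Tonelli after the first triangle inequality), which is exactly how it is used for the Galerkin approximations.
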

    
    \begin{proof}
        We begin by noting two results. The first is the elementary inequality 
        \begin{equation} \label{eq:elementary inequality}
            \left(\sum_{i=1}^{K} x_i\right)^r \lesssim \sum_{i=1}^K x_i^r ,
        \end{equation}
        valid for any $x_i \ge 0$ and $r>0$. The second is an embedding result which states
        \begin{equation} \label{eq:embedding of Wiener norm}
            \Vert f\Vert_{L^{\infty}} \le \sum_{k}\vert \wh{f}(k)\vert \lesssim \Vert A^{\frac{s}{2}} f \Vert_{L^2} \sim \Vert f \Vert_{H^s} ,
        \end{equation}
        for any $s>\frac{3}{2}$. It follows from the Fourier series expansion, the Cauchy-Schwarz inequality and the sufficiently fast decay of $(1+\vert k\vert^2)^{-s}$. We also record that the space of functions $f$ satisfying $\sum_k \vert \wh{f}(k)\vert < \infty$ is called the Wiener algebra, denoted by $\mathcal{W}$. In what follows, we denote by $I$ the set of three-dimensional vectors satisfying $\sum_{i=1}^K h_i + k = 0$.

        We now proceed to prove the lemma, as
        \begin{equation*}
        \begin{aligned}
            &\lvert \langle \Pi_{i=1}^K f_i , Tg\rangle_{G^{\frac{p}{2}}_{\sigma}} \rvert \\
            &\lesssim \sum_I \left(\Pi_{i=1}^K \lvert \wh{f_i}(h_i)\rvert\right) (1+\lvert k\rvert^2)^p e^{2\sigma (1+\lvert k\rvert^2)^{\frac{1}{2}}} \lvert k \rvert \lvert \wh{g}(k) \rvert \\
            &\lesssim \sum_I \sum_{j=1}^K (1+\lvert h_j\rvert^2)^{\frac{p}{2}} \left(\Pi_{i=1}^K e^{\sigma (1+\lvert h_i\rvert^2)^{\frac{1}{2}}} \lvert \wh{f_i}(h_i)\rvert\right) (1+\lvert k\rvert^2)^{\frac{p}{2}} e^{\sigma (1+\lvert k\rvert^2)^{\frac{1}{2}}} \lvert k \rvert \lvert \wh{g}(k) \rvert \\
            &\lesssim \sum_I \sum_{j=1}^K (1+\lvert h_j\rvert^2)^{\frac{p}{2}} \left(\Pi_{i=1}^K (1+\lvert h_i\rvert^2)^{\frac{1}{4}} e^{\sigma (1+\lvert h_i\rvert^2)^{\frac{1}{2}}} \lvert \wh{f_i}(h_i)\rvert\right) (1+\lvert k\rvert^2)^{\frac{p}{2}+{\frac{1}{4}}} e^{\sigma (1+\lvert k\rvert^2)^{\frac{1}{2}}} \lvert \wh{g}(k) \rvert \\
            &\lesssim \sum_{j=1}^K \lVert A^{\frac{1}{4}} f_j \rVert_{G^{\frac{p}{2}}_{\sigma}} \left(\Pi_{\substack{i=1 \\ i\neq j}}^K \lVert A^{\frac{1}{4}} e^{\sigma A^{\frac{1}{2}}} f_i \rVert_{\mathcal{W}} \right) \lVert A^{\frac{1}{4}} g \rVert_{G^{\frac{p}{2}}_{\sigma}} \\
            &\lesssim \sum_{j=1}^K \lVert A^{\frac{1}{4}} f_j \rVert_{G^{\frac{p}{2}}_{\sigma}} \left( \Pi_{\substack{i=1 \\ i\neq j}}^K \lVert f_i \rVert_{G^{\frac{p}{2}}_{\sigma}} \right) \lVert A^{\frac{1}{4}} g \rVert_{G^{\frac{p}{2}}_{\sigma}} .
        \end{aligned}
        \end{equation*}
        In the second inequality, we used the triangle inequality and~\eqref{eq:elementary inequality} to write
        \begin{equation*}
            (1+\lvert k\rvert^2)^r \lesssim \sum_{j=1}^K (1+\lvert h_j\rvert^2)^r .
        \end{equation*}
        The fourth line follows similarly, since
        \begin{equation*}
            \lvert k\rvert^{\frac{1}{2}} = (\lvert k\rvert^2)^{\frac{1}{4}} \lesssim \left(\sum_{i=1}^K \lvert h_i\rvert^2 \right)^{\frac{1}{4}} \lesssim \Pi_{i=1}^K (1+\lvert h_i\rvert^2)^{\frac{1}{4}} .
        \end{equation*}
        An application of H\"older's inequality leads to the fifth line, while the final inequality is obtained by choosing $p>2$ and using the embedding in~\eqref{eq:embedding of Wiener norm}.
    \end{proof}

    \section{A priori estimates} \label{sec:a priori estimates}
    In this section, we derive formal estimates for the system in~\eqref{eq:HVBK Leray projected}, assuming that $t\in [0,T^*]$, where $T^*$ is the local existence time. The time $T^* := \min\{T_1,T_2\}$, where $T_1$ is the existence time arising from the energy estimate (see~\eqref{eq:T1 definition}), and 
    \begin{equation} \label{eq:T2 definition}
        T_2 := \inf \{t \ \rvert \inf_{x\in\T^3} \abs{\ws(t)}=\mf \} .
    \end{equation}

    \subsection{Approximate system} \label{sec:approximate system}
    We begin by considering a finite-dimensional approximation of~\eqref{eq:HVBK Leray projected}, via a Galerkin truncation. Indeed, we have
    \begin{equation} \label{eq:HVBK approximation}
        \begin{aligned}
    		\partial_t \us^N + P^N(\us^N\cdot\nabla \us^N)
            &= 
            -\rho_{\text{\scriptsize n}} P^N\left(\frac{\ws^N}{\abs{\ws^N}}\times (\ws^N\times v^N)\right) \\
            \partial_t \un^N + P^N(\un^N\cdot\nabla \un^N)
            &= 
            \rho_{\text{\scriptsize s}} P^N\left(\frac{\ws^N}{\abs{\ws^N}}\times (\ws^N\times v^N)\right) \\
            v^N 
            &=
            \un^N - \us^N .
    	\end{aligned}
    \end{equation}
    Here, $\us^N(t,x) = \sum_{\abs{k}\le N} \wh{\us}(t,k) e^{i k\cdot x}$ and $\ws^N(t,x) = \sum_{\abs{k}\le N} \wh{\ws}(t,k) e^{i k\cdot x}$ with $\wh{\ws}(t,k) = ik\times\wh{\us}(t,k)$, i.e., only the first $N$ Fourier modes of the velocity and vorticity are retained. The projector $P^N$ (the finite-dimensional restriction of $\Leray$) achieves this, while also ensuring that the $\wh{\us}(t,k)$ are three-dimensional vector-valued functions which satisfy the (spectral) divergence-free constraint: $k\cdot \wh{\us}(t,k) = 0$.

    Note that the nonlinear estimate derived in Lemma~\ref{lem:nonlinear estimate} is valid only if there is at most one derivative to be shared by the entire inner product. While the advection terms in~\eqref{eq:HVBK approximation} satisfy this requirement, this is not true of the mutual friction, which features multiple appearances from the superfluid vorticity (each of which represents a derivative loss compared to the velocity). Guided by this, we now reformulate~\eqref{eq:HVBK} so that it is more conducive to the methods used here. The identity $u = (-\Delta)^{-1}(\nabla\times\omega)$, translated into spectral form, reads $\wh{u}(k) = \frac{ik}{\abs{k}^2}\times\wh{\omega}(k)$, valid for any $k\neq 0$. As a consequence of the vorticity having zero mean value ($\wh{\omega}(0) = 0$) but not the velocity $(\wh{u}(0) \neq 0)$, it is not possible to entirely describe the system through just the vortex dynamics of the two fluids. To be more precise, we need an equation for the dynamics of the zeroth Fourier modes of the velocities, i.e., the (spatial) averages of the velocities. With this in mind, we arrive at an equivalent formulation of~\eqref{eq:HVBK approximation}, given by 
    \begin{equation} \label{eq:HVBK reformulated}
        \begin{aligned}
    		\partial_t \overline{\us^N}
            &= 
            -\rho_{\text{\scriptsize n}} \int_{\T^3}P^N\left(\frac{\ws^N}{\abs{\ws^N}}\times (\ws^N\times v^N)\right) \\
            \partial_t \overline{\un^N}
            &= 
            \rho_{\text{\scriptsize s}} \int_{\T^3}P^N\left(\frac{\ws^N}{\abs{\ws^N}}\times (\ws^N\times v^N)\right) \\
            \partial_t \ws^N
            &= 
            - P^N(\us^N\cdot\nabla \ws^N) + P^N(\ws^N\cdot\nabla \us^N) -\rho_{\text{\scriptsize n}} \nabla\times P^N \left(\frac{\ws^N}{\abs{\ws^N}}\times (\ws^N\times v^N)\right) \\
            \partial_t \wn^N 
            &= 
            - P^N(\un^N\cdot\nabla \wn^N) + P^N(\wn^N\cdot\nabla \un^N) + \rho_{\text{\scriptsize s}} \nabla\times P^N\left(\frac{\ws^N}{\abs{\ws^N}}\times (\ws^N\times v^N)\right) .
    	\end{aligned}
    \end{equation}
    The first and second equations are obtained by integrating~\eqref{eq:HVBK approximation} over $\T^3$, whereas the third and fourth equations are a result of applying the curl operator to ~\eqref{eq:HVBK approximation}. Henceforth, this is the system we analyze, supplemented with the initial conditions: $\overline{\us^N}(0,x) = P^N \overline{\us^0}(x)$ and $\ws^N(0,x) = P^N \ws^0(x)$, and likewise for the normal fluid as well. The calculations in the rest of this section are all valid in a local time interval $0\le t\le T^N$. In the course of deriving the a priori estimates, we establish that the local existence time can be chosen to be independent of $N$.

    \subsection{Average velocity estimate} \label{sec:average velocity estimate}
    Applying the Cauchy-Schwarz inequality to the RHS of~\eqref{eq:HVBK reformulated}$_1$,
    \begin{equation} \label{eq:Cauhcy-Schwarz on average superfluid equation}
        \partial_t \overline{\us^N} 
        \lesssim 
        \lVert\ws^N\rVert_{L^2} \lVert v^N\rVert_{L^2}
        \lesssim
        \lVert\ws^N\rVert_{L^2} ,
    \end{equation}
    where the second inequality follows from the energy estimate in~\eqref{eq:energy estimate}, since 
    \begin{equation*}
        \lVert v^N\rVert_{L^2} \le \lVert \us^N\rVert_{L^2} + \lVert \un^N\rVert_{L^2} \lesssim \lVert P^N \us^0\rVert_{L^2} + \lVert P^N \un^0\rVert_{L^2} \le \lVert \us^0\rVert_{L^2} + \lVert \un^0\rVert_{L^2} .
    \end{equation*}
    Integrating~\eqref{eq:Cauhcy-Schwarz on average superfluid equation} in time yields,
    \begin{equation} \label{eq:bound for average super velocity}
        \wh{\us}(t,0) \le \wh{\us^0}(0) + C\lVert\ws^N\rVert_{G^{\frac{p}{2}}_{\sigma}}T^N .
    \end{equation}
    Following a similar argument for~\eqref{eq:HVBK reformulated}$_2$, we arrive at
    \begin{equation} \label{eq:bound for average normal velocity}
        \wh{\un}(t,0) \le \wh{\un^0}(0) + C\lVert\ws^N\rVert_{G^{\frac{p}{2}}_{\sigma}}T^N .
    \end{equation}

    \begin{rem}
        As a consequence of~\eqref{eq:Gevrey class norm} and ~\eqref{eq:bound for average super velocity}, we also get the estimate (omitting the time variable for simplicity)
        \begin{equation} \label{eq:expressing Gevrey norm of u in terms of Gevrey norm of omega}
        \begin{aligned}
            \lVert A^{\frac{r}{2}} \us^N \rVert_{G^{\frac{p}{2}}_{\sigma}}^2 &= \sum_{\lvert k\rvert\le N} \abs{\wh{\us}(k)}^2 (1+\abs{k}^2)^{p+r} e^{2\sigma (1+\abs{k}^2)^{\frac{1}{2}}} \\
            &\le \abs{\wh{\us}(0)}^2 e^{2\sigma} + \sum_{1\le \lvert k\rvert\le N} \frac{1+\lvert k\rvert^2}{\abs{k}^2} \abs{\wh{\ws}(k)}^2 (1+\abs{k}^2)^{p-1+r} e^{2\sigma (1+\abs{k}^2)^{\frac{1}{2}}} \\
            &\lesssim \lvert\wh{\us^0}(0)\rvert^2 + \lVert A^{\frac{r}{2}} \ws^N \rVert_{G^{\frac{p-1}{2}}_{\sigma}}^2 (1+T^N)^2 ,
        \end{aligned}
        \end{equation}
        which will be used later.
    \end{rem}

    \subsection{Analytic norm energy estimate} \label{sec:Gevrey norm energy estimate}
    We now apply $A^{\frac{p}{2}}e^{\sigma(t)A^{\frac{1}{2}}}$ to~\eqref{eq:HVBK reformulated}$_1$, and then take the $L^2$ inner product with $A^{\frac{p}{2}}e^{\sigma(t)A^{\frac{1}{2}}}\ws^N$ to obtain
    \begin{equation*}
    \begin{aligned}
        \langle A^{\frac{p}{2}}e^{\sigma A^{\frac{1}{2}}}\ws^N, A^{\frac{p}{2}}e^{\sigma A^{\frac{1}{2}}}\partial_t \ws^N \rangle_{L^2_x} 
        &=
        -\langle A^{\frac{p}{2}}e^{\sigma A^{\frac{1}{2}}}\ws^N, A^{\frac{p}{2}}e^{\sigma A^{\frac{1}{2}}}P^N(\us^N\cdot\nabla \ws^N) \rangle_{L^2_x} \\
        &\quad +\langle A^{\frac{p}{2}}e^{\sigma A^{\frac{1}{2}}}\ws^N, A^{\frac{p}{2}}e^{\sigma A^{\frac{1}{2}}}P^N(\ws^N\cdot\nabla \us^N) \rangle_{L^2_x} \\
        &\quad - \rho_{\text{\scriptsize n}}\langle A^{\frac{p}{2}}e^{\sigma A^{\frac{1}{2}}}\ws^N, A^{\frac{p}{2}}e^{\sigma A^{\frac{1}{2}}}\nabla\times P^N\left(\frac{\ws^N}{\abs{\ws^N}}\times (\ws^N\times v^N)\right) \rangle_{L^2_x} ,
    \end{aligned}
    \end{equation*}
    where we have hidden the time dependence of $\sigma(t)$ for brevity. We integrate by parts (in time on the LHS and in space on the RHS), and use $\sigma(t) = \sigma_0 - \delta t$ for a $\delta>0$ that will be determined later, to arrive at
    \begin{equation} \label{eq:Gevrey energy estimate 1}
    \begin{aligned}
        \frac{1}{2}\frac{d}{dt}\lVert \ws^N \rVert_{G^{\frac{p}{2}}_{\sigma}}^2 + \delta\lVert A^{\frac{1}{4}} \ws^N \rVert_{G^{\frac{p}{2}}_{\sigma}}^2 
        &=
        \langle \nabla\ws^N, \us^N\otimes \ws^N \rangle_{G^{\frac{p}{2}}_{\sigma}} -\langle \nabla\ws^N, \ws^N\otimes \us^N \rangle_{G^{\frac{p}{2}}_{\sigma}} \\
        &\quad + \rho_{\text{\scriptsize n}}\langle \nabla\times\ws^N, \frac{\ws^N}{\abs{\ws^N}}\times (\ws^N\times v^N) \rangle_{G^{\frac{p}{2}}_{\sigma}} .
    \end{aligned}
    \end{equation}

    The first and second terms of the RHS can be treated using Lemma~\ref{lem:nonlinear estimate} with $K=2$. Indeed, the first term leads to
    \begin{equation} \label{eq:first term on RHS Gevrey energy estimate}
    \begin{aligned}
        \lvert\langle \nabla\ws^N, \us^N\otimes \ws^N \rangle_{G^{\frac{p}{2}}_{\sigma}}\rvert 
        &\lesssim 
        \lVert \us^N \rVert_{G^{\frac{p}{2}}_{\sigma}} \lVert A^{\frac{1}{4}} \ws^N \rVert_{G^{\frac{p}{2}}_{\sigma}}^2 + \lVert A^{\frac{1}{4}} \us^N \rVert_{G^{\frac{p}{2}}_{\sigma}} \lVert \ws^N \rVert_{G^{\frac{p}{2}}_{\sigma}} \lVert A^{\frac{1}{4}} \ws^N \rVert_{G^{\frac{p}{2}}_{\sigma}} \\
        &\lesssim \left(\lvert\wh{\us^0}(0)\rvert + \lVert \ws^N \rVert_{G^{\frac{p-1}{2}}_{\sigma}} (1+T^N)\right) \lVert A^{\frac{1}{4}} \ws^N \rVert_{G^{\frac{p}{2}}_{\sigma}}^2 \\
        &\quad + \left(\lvert\wh{\us^0}(0)\rvert + \lVert A^{\frac{1}{4}} \ws^N \rVert_{G^{\frac{p-1}{2}}_{\sigma}} (1+T^N)\right) \lVert \ws^N \rVert_{G^{\frac{p}{2}}_{\sigma}} \lVert A^{\frac{1}{4}} \ws^N \rVert_{G^{\frac{p}{2}}_{\sigma}} \\
        &\lesssim \left(\lvert\wh{\us^0}(0)\rvert + \lVert \ws^N \rVert_{G^{\frac{p}{2}}_{\sigma}} (1+T^N)\right) \lVert A^{\frac{1}{4}} \ws^N \rVert_{G^{\frac{p}{2}}_{\sigma}}^2 .
    \end{aligned}
    \end{equation}
    The second inequality results from~\eqref{eq:expressing Gevrey norm of u in terms of Gevrey norm of omega}, when $r=0$ and $r=\frac{1}{2}$.
    
    To deal with the third term on the RHS of~\eqref{eq:Gevrey energy estimate 1}, we apply Lemma~\ref{lem:nonlinear estimate} for $K=4$, which yields
    \begin{equation} \label{eq:last term on RHS Gevrey energy estimate}
        \begin{aligned}
            &\lvert \langle \nabla\times\ws^N, \frac{\ws^N}{\abs{\ws^N}}\times (\ws^N\times v^N) \rangle_{G^{\frac{p}{2}}_{\sigma}} \rvert \\
            &\lesssim \lVert A^{\frac{1}{4}} \ws^N \rVert_{G^{\frac{p}{2}}_{\sigma}} \left( \lVert A^{\frac{1}{4}} \ws^N \rVert_{G^{\frac{p}{2}}_{\sigma}} \lVert \ws^N \rVert_{G^{\frac{p}{2}}_{\sigma}} \lVert \frac{1}{\abs{\ws^N}} \rVert_{G^{\frac{p}{2}}_{\sigma}} \lVert v^N \rVert_{G^{\frac{p}{2}}_{\sigma}} \right. \\ 
            &\left. \qquad \qquad \qquad \qquad + \lVert \ws^N \rVert_{G^{\frac{p}{2}}_{\sigma}}^2 \lVert A^{\frac{1}{4}} \frac{1}{\abs{\ws^N}} \rVert_{G^{\frac{p}{2}}_{\sigma}} \lVert v^N \rVert_{G^{\frac{p}{2}}_{\sigma}} + \lVert \ws^N \rVert_{G^{\frac{p}{2}}_{\sigma}}^2 \lVert \frac{1}{\abs{\ws^N}} \rVert_{G^{\frac{p}{2}}_{\sigma}} \lVert A^{\frac{1}{4}} v^N \rVert_{G^{\frac{p}{2}}_{\sigma}} \right) \\
            &\lesssim \lVert A^{\frac{1}{4}} \ws^N \rVert_{G^{\frac{p}{2}}_{\sigma}}^2 \lVert \ws^N \rVert_{G^{\frac{p}{2}}_{\sigma}} \lVert A^{\frac{1}{4}} \frac{1}{\abs{\ws^N}} \rVert_{G^{\frac{p}{2}}_{\sigma}} \lVert A^{\frac{1}{4}} v^N \rVert_{G^{\frac{p}{2}}_{\sigma}} .
        \end{aligned}
    \end{equation}
    We now denote 
    \begin{equation} \label{defining X^N and Y^N}
    \begin{gathered}
        X^N := 1 + \lVert\ws^N \rVert_{G^{\frac{p}{2}}_{\sigma}}^2 + \lVert\wn^N \rVert_{G^{\frac{p}{2}}_{\sigma}}^2 , \quad Y^N := \lVert A^{\frac{1}{4}}\ws^N \rVert_{G^{\frac{p}{2}}_{\sigma}}^2 + \lVert A^{\frac{1}{4}}\wn^N \rVert_{G^{\frac{p}{2}}_{\sigma}}^2 , \\
        \overline{U} := \lvert\wh{\us^0}(0)\rvert + \lvert\wh{\un^0}(0)\rvert .
    \end{gathered}
    \end{equation} 
    We can use~\eqref{eq:expressing Gevrey norm of u in terms of Gevrey norm of omega}, as well as~\eqref{eq:1/omega analytic norm 3} for $r=\frac{1}{2}$, to rewrite~\eqref{eq:last term on RHS Gevrey energy estimate} as
    \begin{equation} \label{eq:last term on RHS Gevrey energy estimate 2}
    \begin{aligned}
        &\lvert \langle \nabla\times\ws^N, \frac{\ws^N}{\abs{\ws^N}}\times (\ws^N\times v^N) \rangle_{G^{\frac{p}{2}}_{\sigma}} \rvert \\
        &\lesssim \sqrt{X^N} \left( \overline{U} + \sqrt{X^N}(1+T^N) \right) \left( e^{\sigma_0} + (p+1)!\left(\frac{\mf}{2}-C_0\sigma_0\right)^{-p-2} \right) Y^N \\
        &\lesssim X^N (1+\overline{U}+T^N) Y^N .
    \end{aligned}
    \end{equation}
    Substituting~\eqref{eq:first term on RHS Gevrey energy estimate} and~\eqref{eq:last term on RHS Gevrey energy estimate 2} into~\eqref{eq:Gevrey energy estimate 1} therefore leads to
    \begin{equation} \label{eq:Gevrey energy estimate 2}
        \frac{1}{2}\frac{d}{dt}\lVert \ws^N \rVert_{G^{\frac{p}{2}}_{\sigma}}^2 + \delta\lVert A^{\frac{1}{4}}\ws^N \rVert_{G^{\frac{p}{2}}_{\sigma}}^2
        \lesssim X^N (1+\overline{U}+T^N) Y^N .
    \end{equation}
    
    Performing similar estimates on the normal fluid equation~\eqref{eq:HVBK approximation}$_2$, and adding the result with~\eqref{eq:Gevrey energy estimate 2} finally yields
    \begin{equation} \label{eq:Gevrey energy estimate 3}
        \frac{1}{2}\frac{d}{dt}X^N + \delta Y^N
        \le 
        CX^N (1+\overline{U}+T^N) Y^N .
    \end{equation}
    Then, for $X_0 := 1 + \lVert\ws^0 \rVert_{G^{\frac{p}{2}}_{\sigma_0}}^2 + \lVert\wn^0 \rVert_{G^{\frac{p}{2}}_{\sigma_0}}^2$, we have
    \begin{equation} \label{eq:delta definition}
    \begin{aligned}
        \delta := 2CX_0 (1+\overline{U}+T^N) \ge 2CX^N(0) (1+\overline{U}+T^N) ,
    \end{aligned}
    \end{equation}
    we have
    \begin{equation} \label{eq:Gevrey energy estimate result}
        X^N(t) + \delta\int_0^t Y^N(s) ds \le X_0 , \quad 0\le t < T^N .
    \end{equation}
    The existence time is defined by the relation
    \begin{equation} \label{eq:T1 definition}
        T^N := \frac{\sigma_0}{\delta} = \frac{\sigma_0}{2CX_0 (1+\overline{U}+T^N)} .
    \end{equation}
    The above equation admits a positive solution which depends only on $\mf$, $p$, $\sigma_0$, $\overline{U}$, $\lVert\ws^0 \rVert_{G^{\frac{p}{2}}_{\sigma_0}}$, and $\lVert\wn^0 \rVert_{G^{\frac{p}{2}}_{\sigma_0}}$. The existence time is thus independent of $N$, and so we label it $T_1$.
    
    In conclusion, from~\eqref{eq:expressing Gevrey norm of u in terms of Gevrey norm of omega} and~\eqref{eq:Gevrey energy estimate result}, we have $\us^N,\un^N \in L^{\infty}(0,T_1;G^{\frac{p+1}{2}}_{\sigma}\cap H) \cap L^2(0,T_1;G^{\frac{p+\frac{3}{2}}{2}}_{\sigma}\cap H)$, where the divergence-free property follows from the construction of the approximate velocity field. From here, we return to~\eqref{eq:HVBK approximation} to infer that $\partial_t \us^N,\partial_t \un^N \in L^{\infty}(0,T_1;G^{\frac{p}{2}}_{\sigma}\cap H) \cap L^2(0,T_1;G^{\frac{p+\frac{1}{2}}{2}}_{\sigma}\cap H)$. All of the regularity results mentioned here are uniform in $N$, which will allow us to extract a weakly convergent subsequence that retains the same properties.

    \subsection{Ensuring a positive lower bound to $\abs{\ws}$} \label{sec:Ensuring non-vanishing vorticity}
    We return to~\eqref{eq:HVBK reformulated}$_3$ and write its solution as
    \begin{equation} \label{eq:formal solution omega-s}
        \ws^N(t,x) = P^N \ws^0(x) + \int_0^t \mathcal{T}_{\text{\scriptsize s}}^N(\tau,x) d\tau ,
    \end{equation}
    where $\mathcal{T}_{\text{\scriptsize s}}^N$ is the RHS of~\eqref{eq:HVBK reformulated}$_3$. Observe that    \begin{equation} \label{eq:Bernstein inequality for complement of projection}
    \begin{aligned}
        \lVert (I-P^N)\ws^0 \rVert_{L^{\infty}} 
        &\lesssim N^{-s} \lVert \lvert\nabla\rvert^s (I-P^N)\ws^0 \rVert_{L^{\infty}} \\
        &\lesssim N^{-s} \lVert A^{\frac{s}{2}} (I-P^N)\ws^0 \rVert_{L^{\infty}} \lesssim N^{-s} \lVert \ws^0 \rVert_{G^{\frac{p}{2}}_{\sigma_0}} ,
    \end{aligned}
    \end{equation}
    for $s>0$ such that $\frac{3}{2}+s < p$. The first inequality can be proved similar to Bernstein's inequalities~\cite[Appendix A]{Tao2006NonlinearAnalysis}, while the last inequality is due to~\eqref{eq:embedding of Wiener norm}. Now, by the restriction on the initial vorticity, and from the triangle inequality, we know that
    \begin{equation} \label{eq:triangle inequality for vorticity}
        \mi \le \lvert \ws^0 \rvert \le \lvert P^N\ws^0 \rvert + \lvert (I-P^N)\ws^0 \rvert .
    \end{equation}
    Using~\eqref{eq:Bernstein inequality for complement of projection}, for a sufficiently large $N$, say $N>N^*$, we can ensure that the last term in~\eqref{eq:triangle inequality for vorticity} is less than, say $\frac{\mi-\mf}{2}$. This implies that, for $N>N^*$, we have $\lvert P^N\ws^0 \rvert \ge \frac{\mi+\mf}{2}$. Therefore, returning to~\eqref{eq:formal solution omega-s}, it is sufficient to demand that the time $T_2$ (defined in~\eqref{eq:T2 definition}) is such that
    \begin{equation} \label{eq:time integral of torque constraint}
        \left\lvert\int_0^{T_2} \mathcal{T}_{\text{\scriptsize s}}^N(\tau,x) d\tau \right\rvert \le \frac{\mi-\mf}{2} ,
    \end{equation}
    so that the triangle inequality yields the required condition $\lvert\ws^N(t,x)\rvert \ge \mf$. Using Sobolev embedding and~\eqref{eq:Gevrey class norm}, we can guarantee~\eqref{eq:time integral of torque constraint} with the sufficiency given by
    \begin{equation} \label{eq:time integral of torque constraint 2}
        \int_0^{T_2} \lVert \mathcal{T}_{\text{\scriptsize s}}^N(\tau)\rVert_{L^{\infty}} d\tau \le C\int_0^{T_2} \lVert \mathcal{T}_{\text{\scriptsize s}}^N(\tau)\rVert_{G^{\frac{p-1}{2}}_{\sigma}} d\tau \le \frac{\mi-\mf}{2} .
    \end{equation}
    We now examine each term of $\lVert \mathcal{T}_{\text{\scriptsize s}}^N(\tau)\rVert_{G^{\frac{p-1}{2}}_{\sigma}}$. We begin with
    \begin{equation} \label{eq:T_s^N first term}
        \lVert P^N(\us^N\cdot\nabla \ws^N)\rVert_{G^{\frac{p-1}{2}}_{\sigma}} \lesssim \lVert \us^N \rVert_{G^{\frac{p-1}{2}}_{\sigma}} \lVert \nabla \ws^N\rVert_{G^{\frac{p-1}{2}}_{\sigma}} \lesssim \lVert \us^N\rVert_{G^{\frac{p+1}{2}}_{\sigma}}^2 ,
    \end{equation}
    where we have used the algebra property of the Gevrey norm. In a similar manner, we also obtain
    \begin{equation} \label{eq:T_s^N third term}
        \lVert P^N(\ws^N\cdot\nabla \us^N)\rVert_{G^{\frac{p-1}{2}}_{\sigma}} \lesssim \lVert \us^N\rVert_{G^{\frac{p+1}{2}}_{\sigma}}^2 .
    \end{equation}
    Finally, we deal with the last term of $\mathcal{T}_{\text{\scriptsize s}}^N$ as 
    \begin{equation} \label{eq:T_s^N fourth term}
    \begin{aligned}
        &\left\Vert \nabla\times P^N \left(\frac{\ws^N}{\abs{\ws^N}}\times (\ws^N\times v^N)\right)\right\Vert_{G^{\frac{p-1}{2}}_{\sigma}} \\
        &\lesssim 
        \left\lVert \frac{\ws^N}{\abs{\ws^N}}\times (\ws^N\times v^N) \right\rVert_{G^{\frac{p}{2}}_{\sigma}} \\
        &\lesssim 
        \lVert \ws^N\rVert_{G^{\frac{p}{2}}_{\sigma}}^2 \lVert \frac{1}{\abs{\ws^N}} \rVert_{G^{\frac{p}{2}}_{\sigma}} \lVert v^N \rVert_{G^{\frac{p}{2}}_{\sigma}} \\
        &\lesssim 
        \lVert \us^N\rVert_{G^{\frac{p+1}{2}}_{\sigma}}^2 \left( e^{\sigma_0} + p!\left(\frac{\mf}{2}-C_0\sigma_0\right)^{-p-1} \right) \left(\lVert \us^N \rVert_{G^{\frac{p+1}{2}}_{\sigma}} + \lVert \un^N \rVert_{G^{\frac{p+1}{2}}_{\sigma}} \right) ,
    \end{aligned}
    \end{equation}
    where we made use of~\eqref{eq:1/omega analytic norm 3}. Combining~\eqref{eq:T_s^N first term}--\eqref{eq:T_s^N fourth term} with the uniform-in-$N$ bounds~\eqref{eq:Gevrey energy estimate result}, we see that it is indeed possible to find a $T_2$ (independent of $N$, and depending on the data) that is small enough so that~\eqref{eq:time integral of torque constraint 2} is met. This implies that $\lvert \ws^N \rvert \ge \mf$ for all $x\in\T^3$ and all $0\le t\le T^*$, where $T^* = \min\{T_1,T_2\}$, closing the necessary a priori estimates.

    \section{Existence of a unique solution} \label{sec:existence of a unique, time-analytic solution}

    \subsection{Existence of a solution}
    We fix the value of $N$ (larger than $N^*$, defined following~\eqref{eq:triangle inequality for vorticity}), and take the inner product of~\eqref{eq:HVBK approximation} with $e^{ij\cdot x}$. This yields a system of ODEs for the functions $\wh{\us}^N(t,k)$ and $\wh{\un}^N(t,k)$, which have a unique solution according to standard ODE theory. Once this is established, it is only a matter of using the a priori estimates in Section~\ref{sec:a priori estimates} to obtain a solution of~\eqref{eq:HVBK Leray projected} as the limit of a subsequence of approximating solutions. 

    \subsection{Uniqueness of the solution}
    Starting from the same initial data, we assume that there are two solutions $(\us^1,\un^1)$ and $(\us^2,\un^2)$, and denote $\Phi_s := \us^1-\us^2$, $\Phi_n := \un^1-\un^2$, $\Ws := \ws^1 - \ws^2$, and $\Wn := \wn^1 - \wn^2$. Then, subtracting~\eqref{eq:HVBK Leray projected} for the two pairs of solutions yields
    \begin{equation} \label{eq:HVBK difference of solutions}
        \begin{aligned}
    		\partial_t \Phi_s 
            &= 
            -\Leray(\Phi_s\cdot\nabla \us^1) -\Leray(\us^2\cdot\nabla \Phi_s) -\rho_{\text{\scriptsize n}} \Leray \mathcal{F} \\
            \partial_t \Phi_n
            &= 
            -\Leray(\Phi_n\cdot\nabla \un^1) -\Leray(\un^2\cdot\nabla \Phi_n) + \rho_{\text{\scriptsize s}} \Leray \mathcal{F} , 
    	\end{aligned}
    \end{equation}
    where
    \begin{equation} \label{eq:delta F expression}
    \begin{aligned}    
        \mathcal{F} &= \frac{\ws^1}{\abs{\ws^1}}\times (\ws^1\times v^1) - \frac{\ws^2}{\abs{\ws^2}}\times (\ws^2\times v^2) \\
        &= \frac{\ws^1}{\abs{\ws^1}}\times(\ws^1\times(\Phi_n - \Phi_s)) + \frac{\ws^1}{\abs{\ws^1}}\times(\Ws\times v^2) + \frac{\Ws}{\abs{\ws^1}}\times(\ws^2\times v^2) \\ 
        &\quad + \left(\frac{1}{\abs{\ws^1}} - \frac{1}{\abs{\ws^2}}\right)\ws^2\times(\ws^2\times v^2) .
    \end{aligned}
    \end{equation}
    We remark that a simple energy-level estimate, i.e., in $L^2$, is not going to be sufficient here, since terms involving the gradient of $\Phi_s$ appear from the mutual friction part of~\eqref{eq:HVBK difference of solutions}, and there is no viscosity. Hence, we have to induce an artificial dissipation through the analytic norm, as was done in Section~\ref{sec:Gevrey norm energy estimate}. Defining $q:= p-\frac{1}{2}>2$ (so that Lemma~\ref{lem:nonlinear estimate} still applies), we take the inner product of~\eqref{eq:HVBK difference of solutions}$_1$ with $\Phi_s$ in $G^{\frac{q}{2}}_{\sigma(t)}$ and obtain
    \begin{equation} \label{eq:uniqueness superfluid estimate 1}
    \begin{aligned}
        \frac{1}{2}\frac{d}{dt}\lVert \Phi_s \rVert_{G^{\frac{q}{2}}_{\sigma}}^2 + \delta \lVert A^{\frac{1}{4}}\Phi_s \rVert_{G^{\frac{q}{2}}_{\sigma}}^2 
        &= -\langle \Phi_s, \Phi_s\cdot\nabla \us^1 \rangle_{G^{\frac{q}{2}}_{\sigma}} - \langle \Phi_s, \us^2\cdot\nabla \Phi_s \rangle_{G^{\frac{q}{2}}_{\sigma}} + \rho_{\text{\scriptsize n}}\langle \Phi_s, \mathcal{F} \rangle_{G^{\frac{q}{2}}_{\sigma}} \\
        &= \sum_{i=1}^6 I_i ,
    \end{aligned}
    \end{equation}
    where the last four terms of the sum correspond to the terms on the RHS of~\eqref{eq:delta F expression}. In what follows, we repeatedly make use of the fact that $G^{\frac{q}{2}}_{\sigma}$ is an algebra. For the first term, the Cauchy-Schwarz inequality yields
    \begin{equation} \label{eq:I_1 term}
    \begin{aligned}
        I_1 
        \lesssim \lVert \nabla \us^2 \rVert_{G^{\frac{q}{2}}_{\sigma}} \lVert \Phi_s \rVert_{G^{\frac{q}{2}}_{\sigma}}^2
        \lesssim \lVert \us^2 \rVert_{G^{\frac{p+1}{2}}_{\sigma}} \lVert \Phi_s \rVert_{G^{\frac{q}{2}}_{\sigma}}^2. 
    \end{aligned}
    \end{equation}
    The term~$I_2$ is accounted for in a similar manner, while also keeping in mind that $\nabla \Phi_s$ is equivalent to $\Ws$ insofar as the analytic (or any Sobolev/Lebesgue) norm is concerned. Hence,
    \begin{equation} \label{eq:I_2 term}
    \begin{aligned}
        I_2 
        &\lesssim \lVert \us^2 \rVert_{G^{\frac{q}{2}}_{\sigma}} \lVert \Phi_s \rVert_{G^{\frac{q}{2}}_{\sigma}} \lVert \nabla\Phi_s \rVert_{G^{\frac{q}{2}}_{\sigma}} \lesssim \lVert \us^2 \rVert_{G^{\frac{q}{2}}_{\sigma}} \left(\lVert \Phi_s \rVert_{G^{\frac{q}{2}}_{\sigma}}^2 + \lVert \Ws \rVert_{G^{\frac{q}{2}}_{\sigma}}^2 \right) . 
    \end{aligned}
    \end{equation}
    We proceed on to the terms arising out of $\mathcal{F}$. Making use of~\eqref{eq:1/omega analytic norm 3}, we have
    \begin{equation} \label{eq:I_3 term}
    \begin{aligned}
        I_3 \lesssim \lVert \Phi_s \rVert_{G^{\frac{q}{2}}_{\sigma}} \lVert \ws^1 \rVert_{G^{\frac{q}{2}}_{\sigma}}^2 \lVert \Phi_n - \Phi_s \rVert_{G^{\frac{q}{2}}_{\sigma}} \lesssim \lVert \ws^1 \rVert_{G^{\frac{q}{2}}_{\sigma}}^2 \left( \lVert \Phi_s \rVert_{G^{\frac{q}{2}}_{\sigma}}^2 + \lVert \Phi_n \rVert_{G^{\frac{q}{2}}_{\sigma}}^2 \right) , 
    \end{aligned}
    \end{equation}
    and
    \begin{equation} \label{eq:I_4 term}
    \begin{aligned}
        I_4 \lesssim \lVert \Phi_s \rVert_{G^{\frac{q}{2}}_{\sigma}} \lVert \ws^1 \rVert_{G^{\frac{q}{2}}_{\sigma}} \lVert \Ws \rVert_{G^{\frac{q}{2}}_{\sigma}} \lVert v^2 \rVert_{G^{\frac{q}{2}}_{\sigma}} \lesssim \lVert v^2 \rVert_{G^{\frac{q}{2}}_{\sigma}} \lVert \ws^1 \rVert_{G^{\frac{q}{2}}_{\sigma}} \left( \lVert \Phi_s \rVert_{G^{\frac{q}{2}}_{\sigma}}^2 + \lVert \Ws \rVert_{G^{\frac{q}{2}}_{\sigma}}^2 \right) .
    \end{aligned}
    \end{equation}
    The term $I_5$ leads to an almost identical estimate:
    \begin{equation} \label{eq:I_5 term}
    \begin{aligned}
        I_5 
        &\lesssim \lVert v^2 \rVert_{G^{\frac{q}{2}}_{\sigma}} \lVert \ws^2 \rVert_{G^{\frac{q}{2}}_{\sigma}} \left( \lVert \Phi_s \rVert_{G^{\frac{q}{2}}_{\sigma}}^2 + \lVert \Ws \rVert_{G^{\frac{q}{2}}_{\sigma}}^2 \right) .
    \end{aligned}
    \end{equation}
    To handle the last term in~\eqref{eq:uniqueness superfluid estimate 1}, we begin by writing 
    \begin{equation*}
        \frac{1}{\abs{\ws^1}} - \frac{1}{\abs{\ws^2}} = -\frac{\abs{\ws^1} - \abs{\ws^2}}{\abs{\ws^1}\abs{\ws^2}} = -\frac{\ws^1 + \ws^2}{\abs{\ws^1}\abs{\ws^2} \left(\abs{\ws^2} + \abs{\ws^1}\right)} \cdot \Ws .
    \end{equation*}
    Upon application of~\eqref{eq:1/omega analytic norm 3}, this time with $\abs{\omega}$ replaced with $\abs{\omega^1}+\abs{\omega^2}$, we obtain
    \begin{equation} \label{eq:I_6 term}
    \begin{aligned}
        I_6 &\lesssim \lVert v^2 \rVert_{G^{\frac{q}{2}}_{\sigma}} \lVert \ws^2 \rVert_{G^{\frac{q}{2}}_{\sigma}}^2 \left( \lVert \ws^1 \rVert_{G^{\frac{q}{2}}_{\sigma}} + \lVert \ws^2 \rVert_{G^{\frac{q}{2}}_{\sigma}} \right) \left( \lVert \Phi_s \rVert_{G^{\frac{q}{2}}_{\sigma}}^2 + \lVert \Ws \rVert_{G^{\frac{q}{2}}_{\sigma}}^2 \right) .
    \end{aligned}
    \end{equation}
    Combining~\eqref{eq:uniqueness superfluid estimate 1}--\eqref{eq:I_6 term}, and utilize the analytic norm control obtained in~\eqref{eq:Gevrey energy estimate result}, to arrive at
    \begin{equation} \label{eq:uniqueness superfluid estimate 2}
        \frac{1}{2}\frac{d}{dt}\lVert \Phi_s \rVert_{G^{\frac{q}{2}}_{\sigma}}^2 + \delta \lVert A^{\frac{1}{4}}\Phi_s \rVert_{G^{\frac{q}{2}}_{\sigma}}^2 \le C \left( \lVert \Phi_s \rVert_{G^{\frac{q}{2}}_{\sigma}}^2 + \lVert \Phi_n \rVert_{G^{\frac{q}{2}}_{\sigma}}^2 + \lVert \Ws \rVert_{G^{\frac{q}{2}}_{\sigma}}^2\right) ,
    \end{equation}
    where the constant $C$ depends on $q$ as well as the size of the initial data according to the estimates in Section~\ref{sec:Gevrey norm energy estimate}. Repeating the process with~\eqref{eq:HVBK difference of solutions}$_2$, we obtain
    \begin{equation} \label{eq:uniqueness normal estimate 2}
        \frac{1}{2}\frac{d}{dt}\lVert \Phi_n \rVert_{G^{\frac{q}{2}}_{\sigma}}^2 + \delta \lVert A^{\frac{1}{4}}\Phi_n \rVert_{G^{\frac{q}{2}}_{\sigma}}^2 \le C \left( \lVert \Phi_s \rVert_{G^{\frac{q}{2}}_{\sigma}}^2 + \lVert \Phi_n \rVert_{G^{\frac{q}{2}}_{\sigma}}^2 + \lVert \Ws \rVert_{G^{\frac{q}{2}}_{\sigma}}^2 + \lVert \Wn \rVert_{G^{\frac{q}{2}}_{\sigma}}^2 \right) .
    \end{equation}
    
    Next, we subtract the vorticity equations~\eqref{eq:HVBK reformulated}$_3$ for the two different solutions. We drop the Galerkin truncation index $N$ as well as the finite-dimensional projector $N$ in this process, since we have already constructed a solution. The resulting equation is
    \begin{equation} \label{eq:HVBK vorticity difference of solutions}
        \partial_t \Ws 
        = 
        -\Phi_s\cdot\nabla \ws^1 -\us^2\cdot\nabla \Ws + \Ws\cdot\nabla \us^1 + \ws^2\cdot\nabla \Phi_s -\rho_{\text{\scriptsize n}} \nabla\times \mathcal{F} .
    \end{equation}
    Following the analysis from above, we take the $G^{\frac{q}{2}}_{\sigma(t)}$ inner product of~\eqref{eq:HVBK vorticity difference of solutions} with $\Ws$, to obtain
    \begin{equation} \label{eq:uniqueness superfluid vorticity estimate 1}
    \begin{aligned}
        \frac{1}{2}\frac{d}{dt}\lVert \Ws \rVert_{G^{\frac{q}{2}}_{\sigma}}^2 + \delta \lVert A^{\frac{1}{4}}\Ws \rVert_{G^{\frac{q}{2}}_{\sigma}}^2 
        &= \langle \nabla\Ws, \Phi_s\otimes \ws^1 \rangle_{G^{\frac{q}{2}}_{\sigma}} + \langle \nabla\Ws, \us^2\otimes \Ws \rangle_{G^{\frac{q}{2}}_{\sigma}} - \langle \nabla\Ws, \Ws\otimes \us^1 \rangle_{G^{\frac{q}{2}}_{\sigma}} \\
        &\quad + \langle \Ws, \ws^2\cdot\nabla \Phi_s \rangle_{G^{\frac{q}{2}}_{\sigma}} + \rho_{\text{\scriptsize n}}\langle \nabla\Ws, \mathcal{F} \rangle_{G^{\frac{q}{2}}_{\sigma}} \\
        &= \sum_{i=1}^8 J_i ,
    \end{aligned}
    \end{equation}
    where, once again, the last four terms in the sum correspond to the RHS of~\eqref{eq:delta F expression}. The estimate of $J_1$ is a consequence of Lemma~\ref{lem:nonlinear estimate}, since
    \begin{equation} \label{eq:J_1 term}
    \begin{aligned}
        J_1 &\lesssim \lVert A^{\frac{1}{4}}\Ws \rVert_{G^{\frac{q}{2}}_{\sigma}} \left( \lVert A^{\frac{1}{4}}\Phi_s \rVert_{G^{\frac{q}{2}}_{\sigma}} \lVert \ws^1 \rVert_{G^{\frac{q}{2}}_{\sigma}} + \lVert \Phi_s \rVert_{G^{\frac{q}{2}}_{\sigma}} \lVert A^{\frac{1}{4}}\ws^1 \rVert_{G^{\frac{q}{2}}_{\sigma}} \right) \\
        &\lesssim \lVert \ws^1 \rVert_{G^{\frac{p}{2}}_{\sigma}} \left( \lVert A^{\frac{1}{4}}\Ws \rVert_{G^{\frac{q}{2}}_{\sigma}}^2 + \lVert A^{\frac{1}{4}}\Phi_s \rVert_{G^{\frac{q}{2}}_{\sigma}}^2 \right) .
    \end{aligned}
    \end{equation}
    The terms $J_2$ and $J_3$ result in identical estimates, except for the index of the superfluid velocity. Thus, we have
    \begin{equation} \label{eq:J_2, J_3 terms}
    \begin{aligned}
        J_2, J_3 &\lesssim \lVert A^{\frac{1}{4}}\Ws \rVert_{G^{\frac{q}{2}}_{\sigma}} \left( \lVert A^{\frac{1}{4}}\Ws \rVert_{G^{\frac{q}{2}}_{\sigma}} \lVert \us \rVert_{G^{\frac{q}{2}}_{\sigma}} + \lVert \Ws \rVert_{G^{\frac{q}{2}}_{\sigma}} \lVert A^{\frac{1}{4}}\us \rVert_{G^{\frac{q}{2}}_{\sigma}} \right) \\
        &\lesssim \lVert \us \rVert_{G^{\frac{p}{2}}_{\sigma}} \lVert A^{\frac{1}{4}}\Ws \rVert_{G^{\frac{q}{2}}_{\sigma}}^2 ,
    \end{aligned}
    \end{equation}
    where the generic velocity $\us$ is a stand-in for both $\us^1$ and $\us^2$, whichever is appropriate. For the term $J_4$, we use the algebra property of the analytic norm, which yields
    \begin{equation} \label{eq:J_4 term}
    \begin{aligned}
        J_4 &\lesssim \lVert \Ws \rVert_{G^{\frac{q}{2}}_{\sigma}} \lVert \ws^2 \rVert_{G^{\frac{q}{2}}_{\sigma}} \lVert \nabla\Phi_s \rVert_{G^{\frac{q}{2}}_{\sigma}}
        \lesssim \lVert \ws^2 \rVert_{G^{\frac{q}{2}}_{\sigma}} \lVert \Ws \rVert_{G^{\frac{q}{2}}_{\sigma}}^2 .
    \end{aligned}
    \end{equation}
    Now, we move on to the terms corresponding to the mutual friction. Using Lemma~\ref{lem:nonlinear estimate} and~\eqref{eq:1/omega analytic norm 3}, the first of these becomes
    \begin{equation} \label{eq:J_5 term}
    \begin{aligned}
        J_5 
        &\lesssim \lVert A^{\frac{1}{4}}\Ws \rVert_{G^{\frac{q}{2}}_{\sigma}} \left( \lVert A^{\frac{1}{4}}\ws^1 \rVert_{G^{\frac{q}{2}}_{\sigma}} \lVert \ws^1 \rVert_{G^{\frac{q}{2}}_{\sigma}} \lVert \Phi_n-\Phi_s \rVert_{G^{\frac{q}{2}}_{\sigma}} \right) \\
        &\quad + \lVert A^{\frac{1}{4}}\Ws \rVert_{G^{\frac{q}{2}}_{\sigma}} \left( \lVert \ws^1 \rVert_{G^{\frac{q}{2}}_{\sigma}}^2 \lVert \Phi_n-\Phi_s \rVert_{G^{\frac{q}{2}}_{\sigma}} \right) \\
        &\quad + \lVert A^{\frac{1}{4}}\Ws \rVert_{G^{\frac{q}{2}}_{\sigma}} \left( \lVert \ws^1 \rVert_{G^{\frac{q}{2}}_{\sigma}}^2 \lVert A^{\frac{1}{4}}(\Phi_n-\Phi_s) \rVert_{G^{\frac{q}{2}}_{\sigma}} \right) \\
        &\lesssim \lVert \ws^1 \rVert_{G^{\frac{p}{2}}_{\sigma}}^2 \left( \lVert A^{\frac{1}{4}}\Ws \rVert_{G^{\frac{q}{2}}_{\sigma}}^2 + \lVert A^{\frac{1}{4}}\Phi_s \rVert_{G^{\frac{q}{2}}_{\sigma}}^2 + \lVert A^{\frac{1}{4}}\Phi_n \rVert_{G^{\frac{q}{2}}_{\sigma}}^2 \right) . 
    \end{aligned}
    \end{equation}
    Just like in~\eqref{eq:J_2, J_3 terms}, the terms $J_6$ and $J_7$ yield almost identical estimates, except for the index of one of the vorticities (which we will omit). Following the same steps as for $J_5$, we get
    \begin{equation} \label{eq:J_6, J_7 term}
    \begin{aligned}
        J_6, J_7 
        &\lesssim \lVert \ws \rVert_{G^{\frac{p}{2}}_{\sigma}}^2 \lVert v^2 \rVert_{G^{\frac{p}{2}}_{\sigma}} \lVert A^{\frac{1}{4}}\Ws \rVert_{G^{\frac{q}{2}}_{\sigma}}^2  .
    \end{aligned}
    \end{equation}
    Finally, the treatment of the term $J_8$ proceeds similar to the term $I_6$. Indeed, we obtain
    \begin{equation} \label{eq:J_8 term}
    \begin{aligned}
        J_8 &\lesssim \lVert v^2 \rVert_{G^{\frac{p}{2}}_{\sigma}} \lVert \ws^2 \rVert_{G^{\frac{p}{2}}_{\sigma}}^2 \left( \lVert \ws^1 \rVert_{G^{\frac{p}{2}}_{\sigma}} + \lVert \ws^2 \rVert_{G^{\frac{p}{2}}_{\sigma}} \right) \lVert A^{\frac{1}{4}}\Ws \rVert_{G^{\frac{q}{2}}_{\sigma}}^2  .
    \end{aligned}
    \end{equation}
    From~\eqref{eq:uniqueness superfluid vorticity estimate 1}--\eqref{eq:J_8 term}, the resulting equation is
    \begin{equation} \label{eq:uniqueness superfluid vorticity estimate 2}
        \frac{1}{2}\frac{d}{dt}\lVert \Ws \rVert_{G^{\frac{q}{2}}_{\sigma}}^2 + \delta \lVert A^{\frac{1}{4}}\Ws \rVert_{G^{\frac{q}{2}}_{\sigma}}^2 \le C\left( \lVert A^{\frac{1}{4}}\Ws \rVert_{G^{\frac{q}{2}}_{\sigma}}^2 + \lVert A^{\frac{1}{4}}\Phi_s \rVert_{G^{\frac{q}{2}}_{\sigma}}^2 + \lVert A^{\frac{1}{4}}\Phi_n \rVert_{G^{\frac{q}{2}}_{\sigma}}^2 \right) .
    \end{equation}
    Repeating the above calculations with~\eqref{eq:HVBK reformulated}$_4$ (after dropping the finite-dimensional approximations) leads to a similar estimate for the normal fluid, namely
    \begin{equation} \label{eq:uniqueness normal vorticity estimate 2}
        \frac{1}{2}\frac{d}{dt}\lVert \Wn \rVert_{G^{\frac{q}{2}}_{\sigma}}^2 + \delta \lVert A^{\frac{1}{4}}\Wn \rVert_{G^{\frac{q}{2}}_{\sigma}}^2 \le C\left( \lVert A^{\frac{1}{4}}\Ws \rVert_{G^{\frac{q}{2}}_{\sigma}}^2 + \lVert A^{\frac{1}{4}}\Wn \rVert_{G^{\frac{q}{2}}_{\sigma}}^2 + \lVert A^{\frac{1}{4}}\Phi_s \rVert_{G^{\frac{q}{2}}_{\sigma}}^2 + \lVert A^{\frac{1}{4}}\Phi_n \rVert_{G^{\frac{q}{2}}_{\sigma}}^2 \right) .
    \end{equation}

    We now add~\eqref{eq:uniqueness superfluid estimate 2},~\eqref{eq:uniqueness normal estimate 2},~\eqref{eq:uniqueness superfluid vorticity estimate 2}, and~\eqref{eq:uniqueness normal vorticity estimate 2} to arrive at
    \begin{equation} \label{eq:uniqueness final estimate}
    \begin{aligned}
        &\frac{1}{2}\frac{d}{dt}\left( \lVert \Phi_s \rVert_{G^{\frac{q}{2}}_{\sigma}}^2 + \lVert \Phi_n \rVert_{G^{\frac{q}{2}}_{\sigma}}^2 + \lVert \Ws \rVert_{G^{\frac{q}{2}}_{\sigma}}^2 + \lVert \Wn \rVert_{G^{\frac{q}{2}}_{\sigma}}^2 \right) \\
        &\quad + \delta \left( \lVert A^{\frac{1}{4}}\Phi_s \rVert_{G^{\frac{q}{2}}_{\sigma}}^2 + \lVert A^{\frac{1}{4}}\Phi_n \rVert_{G^{\frac{q}{2}}_{\sigma}}^2 + \lVert A^{\frac{1}{4}}\Ws \rVert_{G^{\frac{q}{2}}_{\sigma}}^2 + \lVert A^{\frac{1}{4}}\Wn \rVert_{G^{\frac{q}{2}}_{\sigma}}^2 \right) \\
        &\le C\left( \lVert A^{\frac{1}{4}}\Phi_s \rVert_{G^{\frac{q}{2}}_{\sigma}}^2 + \lVert A^{\frac{1}{4}}\Phi_n \rVert_{G^{\frac{q}{2}}_{\sigma}}^2 + \lVert A^{\frac{1}{4}}\Ws \rVert_{G^{\frac{q}{2}}_{\sigma}}^2 + \lVert A^{\frac{1}{4}}\Wn \rVert_{G^{\frac{q}{2}}_{\sigma}}^2 \right) \\
        &\quad + C \left( \lVert \Phi_s \rVert_{G^{\frac{q}{2}}_{\sigma}}^2 + \lVert \Phi_n \rVert_{G^{\frac{q}{2}}_{\sigma}}^2 + \lVert \Ws \rVert_{G^{\frac{q}{2}}_{\sigma}}^2 + \lVert \Wn \rVert_{G^{\frac{q}{2}}_{\sigma}}^2 \right) .
    \end{aligned}
    \end{equation}
    We remark that $C$ is a constant that depends on $q$ and on the size of the initial data. Thus, by selecting $\delta = 2C$, we may absorb the first term on the RHS into the LHS. Subsequently, uniqueness is achieved using the Gr\"onwall inequality, since we have identical initial data for both solutions, i.e., 
    $\Phi_s(0) = \Phi_n(0) = \Ws(0) = \Wn(0) = 0$.
    
    \qed

    \section*{Acknowledgments}
    The author is grateful to Juhi Jang and Igor Kukavica for useful discussions.

    \appendix

    \section{Analytic norm of $A^{\frac{r}{2}} \frac{1}{\abs{\omega}}$}

    For any $r\in\R^+$, denoting by $\ceil{r}$ the least integer greater than or equal to $r$, we observe that
    \begin{equation} \label{eq:1/omega analytic norm 1}
    \begin{aligned}
        \lVert A^{\frac{r}{2}} \frac{1}{\abs{\omega}} \rVert_{G^{\frac{p}{2}}_{\sigma}} 
        &= \lVert A^{\frac{p+r}{2}} e^{\sigma A^{\frac{1}{2}}} \frac{1}{\abs{\omega}} \rVert_{L^2} = \lVert \sum_{n=0}^{\infty} \frac{\sigma^n}{n!}A^{\frac{n+p+r}{2}} \frac{1}{\abs{\omega}} \rVert_{L^2} \lesssim \sum_{n=0}^{\infty} \frac{\sigma_0^n}{n!} \lVert \frac{1}{\abs{\omega}} \rVert_{H^{n+\ceil{p}+\ceil{r}}} \\
        &\lesssim \sum_{n=0}^{\infty} \frac{\sigma_0^n}{n!} \left( \lVert \frac{1}{\abs{\omega}} \rVert_{L^2} + C_0^{n+\ceil{p}+\ceil{r}}\lVert D^{n+\ceil{p}+\ceil{r}} \frac{1}{\abs{\omega}} \rVert_{L^2} \right) \\
        &\lesssim e^{\sigma_0} + \sum_{n=0}^{\infty} \frac{(C_0\sigma_0)^n}{n!} \lVert D^{n+\ceil{p}+\ceil{r}} \frac{1}{\abs{\omega}} \rVert_{L^2} .
    \end{aligned}
    \end{equation}
    In the above estimates, $C_0$ is the Poincar\'e constant. To arrive at the last step, we made use of the fact that $\abs{\omega} \ge \mf$. The norm within the infinite sum may be estimated using the analyticity of $x^{-1}$ for $x$ bounded away from $0$. Indeed, we may expand $x^{-1}$ in a convergent Taylor series about some sufficiently large $M>0$. (For instance, $M > X_0 = 1 + \lVert\ws^0 \rVert_{G^{\frac{p}{2}}_{\sigma_0}} + \lVert\wn^0 \rVert_{G^{\frac{p}{2}}_{\sigma_0}}$ so that $\abs{\omega} \le \lVert \omega \rVert_{G^{\frac{p}{2}}_{\sigma_0}} < M$.) Thus,
    \begin{equation} \label{eq:Taylor series for x^-1/2}
        \frac{1}{\abs{\omega}} = \sum_{n=0}^\infty (-1)^n \frac{1}{M^{n+1}} \left( \abs{\omega} - M \right)^n . 
    \end{equation}
    This series converges (absolutely) for $0 < \abs{\omega} < 2M$, showing that $\frac{1}{\abs{\omega}}$ is analytic in the open disc centered at $M$ and of radius $M$. We may now complexify the function and apply Cauchy's differentiation formula to evaluate the higher order derivatives as
    \begin{equation} \label{eq:Cauchy diff formula}
        D^n\left( \frac{1}{z} \right) = \frac{n!}{2\pi i} \oint_C \frac{\frac{1}{z_1}}{(z_1 - z)^{n+1}} dz_1 .
    \end{equation}
    We set $z = \abs{\omega}$ and choose the integration contour to be a circle centered at $\abs{\omega}$ and passing through, say $\frac{\mf}{2}$ on the left end. Since $\abs{\omega}>\mf$, we obtain
    \begin{equation} \label{eq:nth derivative bound Cauchy}
        \abs{D^n\left( \frac{1}{\abs{\omega}} \right)} \le n! \frac{\frac{1}{\frac{\mf}{2}}}{\left(\abs{\omega} - \frac{\mf}{2}\right)^n} \lesssim \frac{2^n n!}{(\mf)^n} .
    \end{equation}

    Returning to~\eqref{eq:1/omega analytic norm 1}, we now utilize the estimates in~\eqref{eq:nth derivative bound Cauchy} which leads to
    \begin{equation} \label{eq:1/omega analytic norm 2}
    \begin{aligned}
        &\lVert A^{\frac{r}{2}} \frac{1}{\abs{\omega}} \rVert_{G^{\frac{p}{2}}_{\sigma}} \\ 
        &\lesssim e^{\sigma_0} + \sum_{n=0}^{\infty} \frac{(C_0\sigma_0)^n}{n!} \frac{2^{n+\ceil{p}+\ceil{r}}(n+\ceil{p}+\ceil{r})!}{(\mf)^{n+\ceil{p}+\ceil{r}}} \\
        &= e^{\sigma_0} + (C_0\sigma_0)^{-\ceil{p}-\ceil{r}} \sum_{n=0}^{\infty} \left(\frac{2C_0\sigma_0}{\mf}\right)^{n+\ceil{p}+\ceil{r}} (n+\ceil{p}+\ceil{r})(n+\ceil{p}+\ceil{r}-1)\dots (n+1) \\
        &= e^{\sigma_0} + (C_0\sigma_0)^{-\ceil{p}-\ceil{r}} \left. \sum_{n=0}^{\infty} \beta^{n} n(n-1)\dots (n-\ceil{p}-\ceil{r}+1) \right\vert_{\beta = \frac{2C_0\sigma_0}{\mf}} .
    \end{aligned}
    \end{equation}
    For $\beta<1$, or equivalently $\sigma_0 < \frac{\mf}{2C_0}$, the series converges uniformly (and absolutely). This allows us to evaluate the series explicitly,
    \begin{equation} \label{eq:1/omega analytic norm 3}
    \begin{aligned}
        \lVert A^{\frac{r}{2}} \frac{1}{\abs{\omega}} \rVert_{G^{\frac{p}{2}}_{\sigma}} 
        &\lesssim e^{\sigma_0} + (C_0\sigma_0)^{-\ceil{p}-\ceil{r}} \beta^{\ceil{p}+\ceil{r}} \left. \sum_{n=0}^{\infty} \left(\frac{d}{d\beta}\right)^{\ceil{p}+\ceil{r}} \beta^{n} \right\vert_{\beta = \frac{2C_0\sigma_0}{\mf}} \\
        &= e^{\sigma_0} + (C_0\sigma_0)^{-\ceil{p}-\ceil{r}} \beta^{\ceil{p}+\ceil{r}} \left. \left(\frac{d}{d\beta}\right)^{\ceil{p}+\ceil{r}} \frac{1}{1-\beta} \right\vert_{\beta = \frac{2C_0\sigma_0}{\mf}} \\
        &= e^{\sigma_0} + \left( \frac{2}{\mf} \right)^{\ceil{p}+\ceil{r}} \frac{(\ceil{p}+\ceil{r})!}{\left(1-\frac{2C_0\sigma_0}{\mf^2} \right)^{\ceil{p}+\ceil{r}+1}} \\
        &\lesssim e^{\sigma_0} + \frac{(\ceil{p}+\ceil{r})!}{\left(\frac{\mf}{2}-C_0\sigma_0 \right)^{\ceil{p}+\ceil{r}+1}} .
    \end{aligned}
    \end{equation}

	\addtocontents{toc}{\protect\setcounter{tocdepth}{2}}
	
	\bibliographystyle{alpha}
	\bibliography{references}

\begin{thebibliography}{VSBP19}

\bibitem[Bae20]{Bae2020AnalyticityEquations}
H.~Bae.
\newblock {Analyticity of solutions to the barotropic compressible Navier-Stokes equations}.
\newblock {\em J. Differ. Equ.}, 269:1718--1743, 2020.

\bibitem[BB14]{Bae2014GevreyNonlinearity}
H.~Bae and A.~Biswas.
\newblock {Gevrey regularity for a class of dissipative equations with analytic nonlinearity}.
\newblock {\em arXiv:1403.1603}, 2014.

\bibitem[BBP14]{Berloff2014ModelingTemperatures}
N.G. Berloff, M.~Brachet, and N.P. Proukakis.
\newblock {Modeling quantum fluid dynamics at nonzero temperatures}.
\newblock {\em Proc. Natl. Acad. Sci. U.S.A.}, 111(1):4675--4682, 2014.

\bibitem[BDV01]{Barenghi2001QuantizedTurbulence}
C.F. Barenghi, R.J. Donnelly, and W.F. Vinen.
\newblock {\em {Quantized Vortex Dynamics and Superfluid Turbulence}}.
\newblock Springer-Verlag, Berlin Heidelberg, first edition, 2001.

\bibitem[BH17]{Biswas2017SpaceDynamics}
A.~Biswas and J.~Hudson.
\newblock {Space and time analyticity for inviscid equations of fluid dynamics}.
\newblock {\em arXiv:1712.02720}, 2017.

\bibitem[BJ88]{Barenghi1988TheII}
C.F. Barenghi and C.A. Jones.
\newblock {The stability of the Couette flow of helium II}.
\newblock {\em J. Fluid Mech}, 197:551--569, 1988.

\bibitem[BLR14]{Barenghi2014ExperimentalFluid}
C.F. Barenghi, V.S. L’vov, and P.-E. Roche.
\newblock {Experimental, numerical, and analytical velocity spectra in turbulent quantum fluid}.
\newblock {\em Proc. Natl. Acad. Sci. U. S. A.}, 111(Supplement 1):4683, 3 2014.

\bibitem[Car96]{Carlson1996AVortices}
N.N. Carlson.
\newblock {A topological defect model of superfluid vortices}.
\newblock {\em Phys. D: Nonlinear Phenom.}, 98:183--200, 1996.

\bibitem[CDX21]{Charve2021GevreyCapillarity}
F.~Charve, R.~Danchin, and J.~Xu.
\newblock {Gevrey analyticity and decay for the compressible Navier-Stokes system with capillarity}.
\newblock {\em Indiana Univ. Math. J.}, 70:1903--1944, 2021.

\bibitem[CF88]{Constantin1988Navier-StokesEquations}
P.~Constantin and C.~Foias.
\newblock {\em {Navier-Stokes equations}}.
\newblock University of Chicago Press, Chicago, 1988.

\bibitem[CK18]{Camliyurt2018OnEquations}
G.~Camliyurt and I.~Kukavica.
\newblock {On the Lagrangian and Eulerian analyticity for the Euler equations}.
\newblock {\em Phys. D: Nonlinear Phenom.}, 376-377:121--130, 2018.

\bibitem[CKV16]{Constantin2016ContrastEquations}
P.~Constantin, I.~Kukavica, and V.~Vicol.
\newblock {Contrast between Lagrangian and Eulerian analytic regularity properties of Euler equations}.
\newblock {\em Ann. Inst. H. Poincar{\'{e}} Anal. Non Lin{\'{e}}aire}, 33(6):1569--1588, 2016.

\bibitem[DP22]{Deng2022RadiusSystem}
W.~Deng and M.~Paicu.
\newblock {Radius of analyticity of solutions to compressible Navier–Stokes–Korteweg system}.
\newblock {\em Nonlinearity}, 35(9):4779--4794, 2022.

\bibitem[FT89]{Foias1989GevreyEquations}
C.~Foias and R.~Temam.
\newblock {Gevrey class regularity for the solutions of the Navier-Stokes equations}.
\newblock {\em J. Funct. Anal.}, 87(2):359--369, 12 1989.

\bibitem[FT98]{Ferrari1998GevreyEquations}
A.B. Ferrari and E.S. Titi.
\newblock {Gevrey regularity for nonlinear analytic parabolic equations}.
\newblock {\em Commun. Partial Differ. Equ.}, 23(1-2):424--448, 1998.

\bibitem[GK98]{Grujic1998SpaceLp}
Z.~Gruji{\'{c}} and I.~Kukavica.
\newblock {Space Analyticity for the Navier–Stokes and Related Equations with Initial Data in Lp}.
\newblock {\em J. Funct. Anal.}, 152(2):447--466, 2 1998.

\bibitem[HB04]{Henderson2004SuperfluidAnnulus}
K.L. Henderson and C.F. Barenghi.
\newblock {Superfluid Couette flow in an enclosed annulus}.
\newblock {\em Theor. Comp. Fluid Dyn.}, 18:183--196, 11 2004.

\bibitem[Hol01]{Holm2001}
D.D. Holm.
\newblock {Introduction to HVBK Dynamics}.
\newblock In C~F Barenghi, R~J Donnelly, and W~F Vinen, editors, {\em Quantized Vortex Dynamics and Superfluid Turbulence}, pages 114--130. Springer Berlin Heidelberg, Berlin, Heidelberg, 2001.

\bibitem[Jay22]{Jayanti2022AnalysisSuperfluidity}
P.C. Jayanti.
\newblock {\em {Analysis of models of superfluidity}}.
\newblock PhD thesis, University of Maryland College Park, 2022.

\bibitem[JJK24a]{Jang2024OnModel}
J.~Jang, P.C. Jayanti, and I.~Kukavica.
\newblock {On the mass transfer in the 3D Pitaevskii model}.
\newblock {\em J. Math. Fluid Mech.}, 26(43), 2024.

\bibitem[JJK24b]{Jang2024Small-dataSuperfluidity}
J.~Jang, P.C. Jayanti, and I.~Kukavica.
\newblock {Small-data global existence of solutions for the Pitaevskii model of superfluidity}.
\newblock {\em Nonlinearity}, 37(6):065009, 2024.

\bibitem[JKL21]{Jang2021MachSpaces}
J.~Jang, I.~Kukavica, and L.~Li.
\newblock {Mach limits in analytic spaces}.
\newblock {\em J. Differ. Equ.}, 299:284--332, 2021.

\bibitem[JT21]{Jayanti2021GlobalEquations}
P.C. Jayanti and K.~Trivisa.
\newblock {Global Regularity of the 2D HVBK equations}.
\newblock {\em J. Nonlinear Sci.}, 31(2), 2021.

\bibitem[JT22]{Jayanti2022LocalSuperfluidity}
P.C. Jayanti and K.~Trivisa.
\newblock {Local existence of solutions to a Navier–Stokes-Nonlinear-Schr{\"{o}}dinger model of superfluidity}.
\newblock {\em J. Math. Fluid Mech.}, 24(46), 2022.

\bibitem[Kha69]{Khalatnikov1969AbsorptionPoint}
I.M. Khalatnikov.
\newblock {Absorption and dispersion of sound in a superfluid liquid near the lambda point}.
\newblock {\em Zh. Eksp. Teor. Fiz.}, 57:489--497, 1969.

\bibitem[KV09]{Kukavica2009OnEquations}
I.~Kukavica and V.~Vicol.
\newblock {On the radius of analyticity of solutions to the three-dimensional Euler equations}.
\newblock {\em Proceedings of the American Mathematical Society}, 137(2):669--677, 2009.

\bibitem[Lan41]{Landau1941TheoryII}
L.~Landau.
\newblock {Theory of the superfluidity of helium II}.
\newblock {\em Phys. Rev.}, 60(4):356--358, 1941.

\bibitem[LO97]{Levermore1997AnalyticityEquation}
C.D. Levermore and M.~Oliver.
\newblock {Analyticity of Solutions for a Generalized Euler Equation}.
\newblock {\em J. Differ. Equ.}, 133(2):321--339, 1997.

\bibitem[LR00]{Lemarie-Rieusset2000UneR3}
P.G. Lemari{\'{e}}-Rieusset.
\newblock {Une remarque sur l'analyticit{\'{e}} des solutions milds des {\'{e}}quations de Navier-Stokes dans R3}.
\newblock {\em C. R. Acad. Sci. Paris}, 330:183--186, 2000.

\bibitem[LR04]{Lemarie-Rieusset2004NouvellesR3}
P.G. Lemari{\'{e}}-Rieusset.
\newblock {Nouvelles remarques sur l'analyticit{\'{e}} des solutions milds des {\'{e}}quations de Navier–Stokes dans R3}.
\newblock {\em C. R. Math.}, 338(6):443--446, 2004.

\bibitem[LT23]{Li2023TheSpaces}
L.~Li and Y.~Tan.
\newblock {The incompressible limit of the isentropic fluids in the analytic spaces}.
\newblock {\em Z. Angew. Math. Phys.}, 74(164), 2023.

\bibitem[Pit59]{Pitaevskii1959PhenomenologicalPoint}
L.P. Pitaevskii.
\newblock {Phenomenological theory of superfluidity near the Lambda point}.
\newblock {\em Sov. Phys. JETP}, 8(2):282--287, 1959.

\bibitem[PL11]{Paoletti2011QuantumTurbulence}
M.S. Paoletti and D.P. Lathrop.
\newblock {Quantum Turbulence}.
\newblock {\em Annu. Rev. Condens. Matter Phys.}, 2(1):213--234, 3 2011.

\bibitem[Pro91]{Promislow1991TimeEquations}
K.~Promislow.
\newblock {Time analyticity and gevrey regularity for solutions of a class of dissipative partial differential equations}.
\newblock {\em Nonlinear Anal. Theory Methods Appl.}, 16(11):959--980, 1 1991.

\bibitem[RBL09]{Roche2009QuantumCascade}
P.-E Roche, C.F. Barenghi, and E.~Leveque.
\newblock {Quantum turbulence at finite temperature: The two-fluids cascade}.
\newblock {\em EPL}, 87(5), 2009.

\bibitem[Sch78]{Schwarz1978TurbulenceCounterflow}
K.W. Schwarz.
\newblock {Turbulence in superfluid helium: Steady homogeneous counterflow}.
\newblock {\em Phys. Rev. B}, 18(1):245--262, 1978.

\bibitem[Sch85]{Schwarz1985Three-dimensionalInteractions}
K.W. Schwarz.
\newblock {Three-dimensional vortex dynamics in superfluid 4He: Line-line and line-boundary interactions}.
\newblock {\em Phys. Rev. B}, 31(9):5782--5804, 1985.

\bibitem[Sch88]{Schwarz1988Three-dimensionalTurbulence}
K.W. Schwarz.
\newblock {Three-dimensional vortex dynamics in superfluid 4He: Homogeneous superfluid turbulence}.
\newblock {\em Phys. Rev. B}, 38(4):2398--2417, 1988.

\bibitem[SRL11]{Salort2011MesoscaleTurbulence}
J.~Salort, P.-E Roche, and E.~Leveque.
\newblock {Mesoscale equipartition of kinetic energy in quantum turbulence}.
\newblock {\em EPL}, 94(2):24001, 2011.

\bibitem[Tao06]{Tao2006NonlinearAnalysis}
T.~Tao.
\newblock {\em {Nonlinear dispersive equations: local and global analysis}}.
\newblock American Mathematical Society, 2006.

\bibitem[TS10]{Tchoufag2010EddyII}
J.~Tchoufag and P.~Sagaut.
\newblock {Eddy damped quasinormal Markovian simulations of superfluid turbulence in helium II}.
\newblock {\em Phys. Fluids}, 22:125103, 2010.

\bibitem[Vin04]{Vinen:808382}
W.F. Vinen.
\newblock {The physics of superfluid helium}.
\newblock Technical report, CERN, 2004.

\bibitem[Vin06]{Vinen2006AnTurbulence}
W.F. Vinen.
\newblock {An introduction to quantum turbulence}.
\newblock {\em J. Low Temp. Phys.}, 145(1-4):7--24, 11 2006.

\bibitem[VSBP19]{Verma2019TheModel}
A.K. Verma, V.~Shukla, A.~Basu, and R.~Pandit.
\newblock {The statistical properties of superfluid turbulence in 4-He from the Hall-Vinen-Bekharevich-Khalatnikov model}.
\newblock {\em arXiv:1905.01507}, 2019.

\end{thebibliography}
	
\end{document}